\titleformat{\section}{\vskip10pt\large\bfseries}{\thesection.}{0.5em}{\centering\vspace{5pt}}
\titleformat{\subsection}{\vskip10pt\normalsize\bfseries}{\thesubsection.}{0.5em}{}
\newtheorem{theorem}{Theorem}[section]
\newtheorem{lemma}[theorem]{Lemma}
\theoremstyle{definition}
\newtheorem{example}[theorem]{Example}
\def\d{\mathrm{d}}
\numberwithin{equation}{section}
\begin{document}

\title[]{Computing rough solutions of the stochastic\\ 
nonlinear wave equation}

\author[]{Jiachuan Cao\,\,}

\author[]{Buyang Li\,\,}
\address{\hspace*{-12pt}Jiachuan Cao and Buyang Li: 
Department of Applied Mathematics, The Hong Kong Polytechnic University,
Hong Kong. 
{\rm Email address: {\tt jiachuan.cao@polyu.edu.hk} and {\tt buyang.li@polyu.edu.hk}}}

\author[]{Katharina Schratz}
\address{\hspace*{-12pt}Katharina Schratz:
Laboratoire Jacques-Louis Lions, Sorbonne Université,
Bureau : 16-26-315, 4 place Jussieu, Paris 5ème. \newline
{\it E-mail address}: {\tt katharina.schratz@sorbonne-universite.fr}}

\subjclass[2010]{65M12, 65M15, 35Q55}


\keywords{stochastic nonlinear wave equation, low regularity, high order convergence, error estimates}

\maketitle
\vspace{-20pt}

\begin{abstract}\noindent
{\small 
The regularity of solutions to the stochastic nonlinear wave equation plays a critical role in the accuracy and efficiency of numerical algorithms. Rough or discontinuous initial conditions pose significant challenges, often leading to a loss of accuracy and reduced computational efficiency in existing methods. In this study, we address these challenges by developing a novel and efficient numerical algorithm specifically designed for computing rough solutions of the stochastic nonlinear wave equation, while significantly relaxing the regularity requirements on the initial data. By leveraging the intrinsic structure of the stochastic nonlinear wave equation and employing advanced tools from harmonic analysis, we construct a time discretization method that achieves robust convergence for initial values \((u^{0}, v^{0}) \in H^{\gamma} \times H^{\gamma-1}\) for all \(\gamma > 0\). Notably, our method attains an improved error rate of \(O(\tau^{2\gamma-})\) in one and two dimensions for \(\gamma \in (0, \frac{1}{2}]\), and \(O(\tau^{\max(\gamma, 2\gamma - \frac{1}{2}-)})\) in three dimensions for \(\gamma \in (0, \frac{3}{4}]\), where \(\tau\) denotes the time step size. These convergence rates surpass those of existing numerical methods under the same regularity conditions, underscoring the advantage of our approach. To validate the performance of our method, we present extensive numerical experiments that demonstrate its superior accuracy and computational efficiency compared to state-of-the-art methods. These results highlight the potential of our approach to enable accurate and efficient simulations of stochastic wave phenomena even in the presence of challenging initial conditions.
}
\end{abstract}


\setlength\abovedisplayskip{4.5pt}
\setlength\belowdisplayskip{4.5pt}

\section{The stochastic wave equation}
We consider the numerical solution of the following stochastic nonlinear wave equation in a rectangular domain $\mathcal{O}=[0,1]^d$ with the periodic boundary condition, with a multiplicative noise of It\^{o} type, i.e., 
\begin{equation}\label{eq:1-1}
\left\{
\begin{array}{lll}
 \partial_{tt}u-\Delta u=f(u)+\sigma(u)\d W\quad &\text{in }\mathcal{O}\times (0,T], \\[2mm]
 u|_{t=0}=u^{0}\quad \text{and}\quad \partial_{t} u|_{t=0}=v^{0} \quad &\text{in }\mathcal{O} , 
\end{array}
\right.
\end{equation}
where $ W(t) $ denotes an $ \mathbb{R} $-valued Wiener process defined on the filtered probability space $(\Omega,\mathcal{F},\{\mathcal{F}_t\}_{0\leq t\leq T},\mathcal{P})$; the nonlinear functions $f(u)$ and $\sigma(u)$ are assumed to be smooth with respect to $u$. 

The nonlinear stochastic wave equation emerges in various fields of physics, including the motion of a strand of DNA suspended in a liquid, heat flow around a ring, and the internal dynamics of the sun \cite{Dalang1962,Thomas2012}. Since an analytical expression for the solutions to the stochastic nonlinear wave equation in \eqref{eq:1-1} is not available, there has been significant interest among researchers in developing numerical approximations to the solution. A key distinction from deterministic problems lies in the term $\sigma(u)\d W$, which represents the increment of a Wiener process scaled by $\sqrt{\d t}$. This scaling introduces considerable challenges in constructing efficient numerical methods for solving the stochastic wave equation. Over the past few decades, various time-stepping schemes, such as the Euler scheme, Crank-Nicolson scheme, Störmer-Verlet leapfrog scheme, trigonometric schemes, and others, have been extensively studied in the literature \cite{ACLW2016,Banjai2021,CHSS2020,Cohen2013,Cohen2016,Cox2024,Feng2024,Hong2022,Kovacs2013,Quer2006,Walsh2006,Wang2014,Wang2015}. 


For instance, Walsh \cite{Walsh2006} was the first to prove strong convergence of the St\"ormer--Verlet leapfrog scheme for the one-dimensional stochastic wave equation with multiplicative noise, establishing an error bound of $O(\tau^{\frac{1}{2}})$, where $\tau$ denotes the stepsize of time discretization. Strong convergence with an error bound of $O(\tau)$ was achieved in \cite{ACLW2016,Banjai2021,Wang2015} through the use of the stochastic trigonometric scheme and a variant of the St\"ormer--Verlet leapfrog scheme. Recently, Feng, Panda, and Prohl developed a $1.5$-order numerical scheme in \cite{Feng2024} for the stochastic wave equation, inspired by second-order time-stepping methods used for deterministic wave equations. 

In general, existing methods for solving the nonlinear stochastic wave equation with multiplicative noise require the solution pair \((u, \partial_t u)\) to be at least in \(C([0,T];H^1 \times L^2)\). This regularity is required for the solution to achieve \(\frac{1}{2}\)-H\"older continuity in time, which plays a crucial role in ensuring the convergence rates of the corresponding numerical methods; see, for instance, Lemma 3.3 in \cite{Wang2015}, Lemma 4.6 in \cite{Banjai2021}, or Lemma 3.3 in \cite{Feng2024}. However, in practice, the nonlinear stochastic wave equation is well-posed for initial data in \(H^{\gamma} \times H^{\gamma-1}\) whenever $\gamma\ge 0$ (see, for example, \cite[Theorem 2.1]{Wang2015} for the well-posedness result of the problem with \( L^2 \times H^{-1} \) initial data).  In particular, rough initial data—potentially with discontinuities—can prevent the solution of \eqref{eq:1-1} from attaining the regularity required by the existing numerical methods. This lack of regularity can significantly reduce the convergence order of the algorithms and, consequently, diminish computational efficiency. The numerical computation of rough solutions, especially those below the regularity threshold of \(H^{1/2} \times H^{-1/2}\), remains a significant challenge in the context of the nonlinear stochastic wave equation. Addressing this issue is crucial for advancing numerical methods that can handle solutions with low regularity or discontinuities.

To address the issue of order reduction when computing rough solutions, various low-regularity algorithms have been developed and analyzed in recent years for deterministic nonlinear dispersive equations. Examples include approaches for the Korteweg-de Vries (KdV) equation \cite{Hofmanova-Schratz-2017,Li-Wu-Yao-2021,Wu-Zhao-IMA,Wu-Zhao-BIT} and the nonlinear Schrödinger equation \cite{BS,OS18,ORS20,RS21}. For the nonlinear wave equation, the low-regularity algorithms constructed in \cite{RS21,LSZ} have second-order convergence in $H^{1} \times L^{2}$ under the regularity condition $(u^{0}, v^{0}) \in H^{1+\frac{d}{4}} \times H^{\frac{d}{4}}$ for spatial dimensions $d = 1, 2, 3$. 
For the nonlinear wave equation with cubic nonlinearity in one dimension, a symmetric low-regularity integrator was constructed in \cite{CLLY2024}, achieving second-order convergence in \(H^{\gamma} \times H^{\gamma-1}\) under the condition \((u^{0}, v^{0}) \in H^{\gamma} \times H^{\gamma-1}\) for \(\gamma > \frac{1}{2}\). In the case of Lipschitz continuous nonlinearities in one and two dimensions, a high-frequency recovery low-regularity integrator was proposed in \cite{CLLY2024}, achieving convergence rates of \(2\gamma\) and \(1.5\gamma\) in one and two dimensions, respectively, for rough solutions of the nonlinear wave equation under the regularity condition \((u^{0}, v^{0}) \in H^{\gamma} \times H^{\gamma-1}\) with \(\gamma \in (0, 1/2]\).

In addition to these works, Bronsard, Bruned, and Schratz introduced a new class of numerical methods for approximating dispersive equations with randomness in low-regularity settings in \cite{ABS2024}. These methods employ resonance-based discretization, as described in \cite{BS}, to approximate the expectation of the solution when randomness is present in the initial data. Building on this, Armstrong-Goodall and Bruned developed a class of resonance-based schemes in \cite{Armstrong2023}, specifically designed to achieve targeted convergence orders for computing rough solutions to the stochastic Schr\"odinger equation (half-order convergence for $H^1$ initial data) and the Manakov system (half-order convergence for $H^3$ initial data). As far as we know, the numerical computation of rough solutions to the stochastic nonlinear wave equation with a multiplicative noise, especially those below the regularity threshold of \(H^{1/2} \times H^{-1/2}\), remains a significant challenge. 

The objective of this work is to develop a novel numerical scheme for the stochastic nonlinear wave equation to achieve higher-order convergence while significantly relaxing the regularity requirements for the initial data. A key aspect that underpins the feasibility of our objective is the ability to construct an algorithm without relying on the H\"older continuity of the exact solution in the time direction, extending the idea of low-regularity integrators as well as low- and high-frequency decomposition techniques to the stochastic nonlinear wave equation. Unlike previous research on numerical schemes for solving \eqref{eq:1-1}, we do not approximate $(u(t), \partial_t u(t))$ over the time interval $[t_n, t_n + \tau]$ using $(u(t_n), \partial_t u(t_n))$. Instead, we employ the solution operator of the linear wave equation, denoted as $e^{sL}$, to construct the approximation of $(u(t), \partial_t u(t))$ as $e^{(t - t_n)L}(u(t_n), \partial_t u(t_n))$, where $L$ is a linear operator defined in \eqref{system_notation}. The advantage of this approach lies in the fact that it allows us to provide a more accurate error estimate, which can be expressed as follows: 
\begin{align}\label{eq:new_approximation}
	\mathbb{E}\left\|\begin{pmatrix}
		u(t)\\[2mm] \partial_{t}u(t)
	\end{pmatrix}-e^{(t-t_n)L}\begin{pmatrix}
	u(t_n)\\[2mm] \partial_{t}u(t_n)
	\end{pmatrix}\right\|_{H^1\times L^2}^{2} \lesssim |t-t_n| ,
\end{align}
which is valid for low-regularity solutions $(u,\partial_t u)\in C([0,T];H^{\gamma}\times H^{\gamma-1})$ with $\gamma$ possibly approaching zero. By utilizing the estimate in \eqref{eq:new_approximation}, we utilize a second-order Taylor expansion of the nonlinear term $\Sigma(u(t), \partial_t u(t)) := (0, \sigma(u(t)))^\top$ at $e^{(t-t_n)L}(u(t_n), \partial_t u(t_n))$. By leveraging the structure of the linear operator $e^{(t-t_n)L}$, we derive the following approximation result:
\begin{align}
	\mathbb{E} \left\|\Sigma\begin{pmatrix}
		u(t)\\[2mm] \partial_{t}u(t)
	\end{pmatrix}-\Sigma\left(e^{(t-t_n)L}\begin{pmatrix}
		u(t_n)\\[2mm] \partial_{t}u(t_n)
	\end{pmatrix}\right)\right\|_{H^1\times L^2}^{2} \lesssim |t-t_n|^2,
\end{align}
which holds for $(u, \partial_t u) \in C([0,T]; H^{\gamma} \times H^{\gamma - 1})$ with $\gamma$ possibly approaching zero. This significantly relaxes the regularity requirements necessary for the numerical solution to achieve a certain convergence rate. 

Furthermore, by employing a low- and high-frequency decomposition of the solution, combined with a detailed error analysis that leverages the structure of the stochastic nonlinear wave equation, we construct a first-order exponential integrator that achieves an error of \(O(\tau^{2\gamma -})\) in one and two dimensions for \((u^0, v^0) \in H^{\gamma} \times H^{\gamma - 1}\) with \(\gamma \in (0, \frac{1}{2}]\). This convergence rate is twice as fast as that of existing methods under the same regularity condition. In three dimensions, the proposed method is shown to converge with an error of \(O(\tau^{\max(\gamma, 2\gamma - \frac{1}{2} -)})\) for \((u^0, v^0) \in H^{\gamma} \times H^{\gamma - 1}\) with \(\gamma \in (0, \frac{3}{4}]\). Importantly, in both cases, this work provides the first proof of convergence rates for computing rough solutions of the stochastic wave equation in the low-regularity regime below \(H^{\frac{1}{2}} \times H^{-\frac{1}{2}}\).


To make our approach clear to the readers, we consider an $\mathbb{R}$-valued Wiener process in \eqref{eq:1-1} and provide an error estimate for the proposed semi-discretization in time. The theoretical results can, in fact, be extended to more general Wiener processes, such as the Q-Wiener process, under suitable regularity assumptions. Furthermore, the arguments presented in this article can be adapted to a full discretization in space using the Fourier spectral method, by incorporating the error estimates for deterministic nonlinear wave equations discussed in \cite{CLLY2024}. These extensions are left for future work and are not discussed in detail in the current paper.

The rest of the paper is organized as follows: In Section \ref{section:main-results}, we introduce the basic notations and main results of this paper, including the proposed algorithm and its error estimates. Section \ref{section:preliminary} covers preliminary technical results that will be essential for constructing and analyzing our method. In Section \ref{sec:1st_order}, we construct our numerical scheme and prove its convergence properties. Finally, in Section \ref{section:numerical_test}, we outline the implementation of the fully discrete scheme and present numerical examples that support the theoretical results while demonstrating the effectiveness of the proposed method.

\section{Notation and Main results}\label{section:main-results}

\subsection{Basic settings}

By introducing $U=(u,\partial_tu)^\top$, we reformulate \eqref{eq:1-1} into the following first-order system of equations:
\begin{equation}\label{system}
\left\{
\begin{array}{lll}
 \partial_{t}U-LU=F(U) + \Sigma(U)\d W\quad &\text{in }\mathcal{O}\times (0,T], \\[2mm]
 U|_{t=0}=U^{0}\quad &\text{in}\,\,\, \mathcal{O} , 
\end{array}
\right.
\end{equation} 
where
\begin{equation}\label{system_notation}
U=\begin{pmatrix}
u\\ \partial_{t}u
\end{pmatrix},\quad
U^{0}=\begin{pmatrix}
u^{0}\\ v^{0}
\end{pmatrix},\quad
F(U)=\begin{pmatrix}
0\\ f(u)
\end{pmatrix},\quad
\Sigma(U)=\begin{pmatrix}
0\\ \sigma(u)
\end{pmatrix},\quad L=\begin{pmatrix}
0& 1\\
\Delta & 0
\end{pmatrix}.
\end{equation}

For the simplicity of error analysis, which inspires us to construct the numerical scheme in \eqref{eq:filtered_1st_scheme}, we assume that the nonlinear functions $f$ and $\sigma$ satisfy the following conditions for some constant $C>0)$:
\begin{equation}\label{sigma_Lipschitz}
|f^{\prime}(s)|+|f^{\prime\prime}(s)|+|f^{\prime\prime\prime}(s)|+|\sigma^{\prime}(s)|+|\sigma^{\prime\prime}(s)|+|\sigma^{\prime\prime\prime}(s)|\lesssim C, \quad\text{for} \quad s\in\mathbb{R}.
\end{equation}
In order to measure the error of the numerical solution, we define the following norm and inner product of $H^{\gamma}(\mathcal{O})\times H^{\gamma-1}(\mathcal{O})$ for $\gamma\in \mathbb{R}$:
\begin{equation}\label{non_energy_norm}
\|U\|_{\gamma}=\Big(\|u\|^{2}_{H^{\gamma}(\mathcal{O})}+\|v\|^{2}_{H^{\gamma-1}(\mathcal{O})}\Big)^{\frac{1}{2}}\quad \text{for}\quad U=(u,v)^\top \in H^{\gamma}(\mathcal{O})\times H^{\gamma-1}(\mathcal{O}),
\end{equation}
and
\begin{equation}\label{non_energy_inner_product}
\langle U_1, U_2\rangle_{\gamma} =\int_{\mathcal{O}}\mathcal{J}^{\gamma} u_1(x) \mathcal{J}^{\gamma} u_2(x)+ \mathcal{J}^{\gamma-1} v_1(x) \mathcal{J}^{\gamma-1} v_2(x) dx ,
\end{equation}
where $\mathcal{J}^{\gamma}:=(1-\Delta)^{\frac{\gamma}{2}}$ and $\mathcal{J}^{\gamma-1}:=(1-\Delta)^{\frac{\gamma-1}{2}}$. 

The $C_{0}$-semigroup generated by $L$ is denoted by $e^{tL}$, which is the solution operator of the linear wave equation (i.e., the map from $U^0$ to $U(t)$ in the case $F(U)\equiv\Sigma(U)\equiv 0$), satisfying the following estimate: 
\begin{equation}\label{semigroup}
\begin{array}{l}
\|e^{tL}U\|_{r}\le C\|U\|_{r}\quad \text{for all}\quad  U\in H^{r}(\mathcal{O})\times H^{r-1}(\mathcal{O}) . 
\end{array}
\end{equation}

Under condition \eqref{sigma_Lipschitz}, it is known that for any initial state $U^0\in H^{\gamma}(\mathcal{O})\times H^{\gamma-1}(\mathcal{O})$ with $\gamma>0$, equation \eqref{system} has a unique solution $U\in L^{\infty}(0,T; H^{\gamma}(\mathcal{O})\times H^{\gamma-1}(\mathcal{O}))$ satisfying the following variation-of-constant formula (see e.g. \cite[Section 7.1]{Prato1996} and \cite[Theorem 2.1]{Wang2015}) for all $t,s\geq 0$:
\begin{equation}\label{mild_form}
U(t+s)=e^{sL}U(t)+\int_{0}^{s}e^{(s-\delta)L}F\big(U(t+\delta)\big)\d\delta+\int_{0}^{s}e^{(s-\delta)L}\Sigma\big(U(t+\delta)\big)\d W(t+\delta) . 
\end{equation}
Moreover, for $p\geq 2$, the solution has continuous sample paths  $U(t)=(u(t),v(t))^\top$ satisfying the following estimate: 
\begin{equation}\label{Lp_bound}
\mathbb{E}\left[\sup_{0\leq t\leq T}\Big(\|u(t)\|_{H^{\gamma}}^{p}+\|v(t)\|_{H^{\gamma-1}}^{p}\Big)\right]\leq K(p),
\end{equation}
where $K(p)$ is some constant depending only on $p$.

Throughout this article, we denote by $A\lesssim B$ or $A\gtrsim B$ the statement ``$A\leq CB$ for some constant $C>0$ which is independent of $\tau$ and $n$''.

\subsection{The numerical scheme and its convergence}
Since any function in the Sobolev space $L^2(\mathcal{O})$ can be expanded into a Fourier series. Accordingly, we introduce the finite-dimensional subspace 
$$
S_N=\bigg\{\sum_{n_1, \cdots, n_d=-N}^N c_{n_1, \cdots, n_d} \exp \left(2n_1 \pi x_1 i\right) \cdots \exp \left(2n_d \pi x_d i\right): c_{n_1, \cdots, n_d} \in \mathbb{C}\bigg\} ,
$$
and approximate functions in $H^s(\mathcal{O})$ by using the finite-dimensional subspace $S_N$. We denote by $\Pi_N$ the $L^2$ projection operator onto $S_N$ defined by 
\begin{align}\label{eq:def_projection}
(w-\Pi_Nw, v) = 0, \quad \forall v\in S_N, \quad w\in H^s(\Omega) , 
\end{align}
and denote $ \Pi_{>N}:= I - \Pi_{N}$ and $\Pi_{(N_1,N_2]}:=\Pi_{N_2}-\Pi_{N_1}$ for $N_2>N_1$. We denote by $I_N: H^s\to S_N$ the trigonometric interpolation  such that  
for any function $w\in H^s$, $s>\frac{d}2$,
$\left(I_N w\right)(x)=w(x)$ for $x \in D^d$, with
$$
D=\left\{\frac{n}{2 N}: n=0, \cdots, 2N-1\right\}. 
$$

Let $t_{n}=n\tau$, $n\in \{0,1,\cdots, M\}$, be a partition of the time interval $[0,T]$ with stepsize $\tau=T/M$ for some positive integer $M$. 
We propose the following filtered low-regularity exponential integrator for the stochastic wave equation: 
\begin{align}\label{eq:filtered_1st_scheme}
U^{n+1}=e^{\tau L}U^{n}+\tau e^{\tau L}\Pi_{\tau^{-1}}F(\Pi_{\tau^{-1}}U^{n})+e^{\tau L}\Pi_{\tau^{-1}}\Sigma(\Pi_{\tau^{-1}}U^{n})\Delta_{n}W, 
\end{align}
with $\Delta_nW= W(t_{n+1}) - W(t_n)$. 
This method is constructed through the error analysis in Section \ref{subsec:construction_1st_order}. 
%
%

The main theoretical result regarding the convergence of the proposed numerical scheme is presented in the following theorem. 

\begin{theorem}\label{thm:0_norm_error_rough_data}
Let $U$ be the unique solution of \eqref{mild_form} with initial value $U^0\in H^{\gamma}(\mathcal{O})\times H^{\gamma-1}(\mathcal{O})$ under condition \eqref{sigma_Lipschitz}, and let $\{U^{n}\}_{n\geq 0}$ be the numerical solution given by \eqref{eq:filtered_1st_scheme}. Then the following error estimates hold: 
\begin{align}
&\max_{0\leq n\leq T/\tau}\Big(\mathbb{E}\big[\|U(t_{n})-U^{n}\|_{0}^{2}\big]\Big)^{\frac{1}{2}}\lesssim\left\{
\begin{aligned}
&\tau^{2\gamma-}
&& \mbox{for} &&\gamma\in \Big(0,\frac{1}{2}\Big] 
&&\mbox{and}\,\,\, d=1,2 ,\\
&\tau^{\gamma}
&& \text{for}&& \gamma\in \Big(0,\frac{1}{2}\Big]
&&\mbox{and}\,\,\, d=3,\\
&\tau^{2\gamma-\frac{1}{2}-}
&& \text{for}&& \gamma\in \Big(\frac{1}{2},\frac{3}{4}\Big] 
&&\mbox{and}\,\,\, d=3.
\end{aligned}
\right.
\end{align}
\end{theorem}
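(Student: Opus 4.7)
The plan is to compare the mild formulation \eqref{mild_form} on $[t_n, t_n+\tau]$,
\[
U(t_{n+1}) = e^{\tau L}U(t_n) + \int_0^\tau e^{(\tau-s)L}F(U(t_n+s))\,\d s + \int_0^\tau e^{(\tau-s)L}\Sigma(U(t_n+s))\,\d W(t_n+s),
\]
with one step of \eqref{eq:filtered_1st_scheme}, yielding an error recursion
\[
e^{n+1} = e^{\tau L}e^{n} + \mathcal{E}^{n}_{F} + \mathcal{E}^{n}_{\Sigma},
\]
where $e^n := U(t_n)-U^n$ and $\mathcal{E}^{n}_{F}$, $\mathcal{E}^{n}_{\Sigma}$ are the deterministic and stochastic local truncation errors. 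I would then square, take expectations, insert the cross terms (which vanish for $\mathcal{E}^{n}_{\Sigma}$ by the martingale property / It\^o isometry), and close the estimate by discrete Gronwall in the $\|\cdot\|_0$-norm, using \eqref{semigroup} and \eqref{Lp_bound}.

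The core ingredient is the approximation \eqref{eq:new_approximation}: inside each subinterval, I would replace $U(t_n+s)$ by $e^{sL}U(t_n)$, which is accurate to $O(s^{1/2})$ in $\|\cdot\|_1$ even when $\gamma\to 0^+$. For the stochastic local error, a second-order Taylor expansion of $\Sigma$ about $e^{sL}U(t_n)$ combined with It\^o isometry gives
\[
\mathbb{E}\|\mathcal{E}^{n}_{\Sigma}\|_0^2 = \mathbb{E}\int_0^\tau \bigl\|e^{(\tau-s)L}\bigl[\Sigma(U(t_n+s))-\Sigma(\Pi_{\tau^{-1}}U^{n})\bigr]\bigr\|_0^2\,\d s,
\]
which I would decompose into a stability part controlled by $\tau\,\mathbb{E}\|e^n\|_0^2$ plus a consistency part handled via Taylor expansion at $e^{sL}U(t_n)$. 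The deterministic local error $\mathcal{E}^{n}_{F}$ is treated analogously by a midpoint-type quadrature estimate.

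The regularity requirement is absorbed by the filter $\Pi_{\tau^{-1}}$: one uses the smoothing estimates $\|\Pi_{>\tau^{-1}}U(t_n)\|_0 \lesssim \tau^{\gamma}\|U(t_n)\|_\gamma$ to trade regularity for powers of $\tau$, and the inverse bound $\|\Pi_{\tau^{-1}}w\|_1\lesssim \tau^{\gamma-1}\|w\|_\gamma$ when interaction with $\|\cdot\|_1$ from \eqref{eq:new_approximation} is needed. Combining these, the stochastic consistency part contributes $O(\tau^{1+4\gamma})$ in $\mathbb{E}\|\cdot\|_0^2$ in $d=1,2$, giving the rate $\tau^{2\gamma-}$ after Gronwall. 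In $d=3$, the Sobolev product estimates on $\sigma'(u)\cdot \Pi_{\tau^{-1}}(\partial_t u)$ degrade because $H^{\gamma}$ is not an algebra for small $\gamma$, producing the weaker bound $\tau^{1+\max(2\gamma,4\gamma-1)}$ and hence the claimed threshold at $\gamma=1/2$ and the cap $\gamma\le 3/4$.

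The main obstacle will be proving the $\tau^{2\gamma-}$ rate sharply in the stochastic term: the gain of a full factor $\tau^{\gamma}$ over the naive approximation comes precisely from expanding $\Sigma$ at $e^{sL}U(t_n)$ rather than at $U(t_n)$, together with the observation that in the second component of $e^{sL}U(t_n)$ one sees not $\partial_t u(t_n)$ alone but $\cos(s|\nabla|)\partial_tu(t_n)+|\nabla|\sin(s|\nabla|)u(t_n)$, so that oscillatory cancellations in $L^2$ can be exploited via Littlewood--Paley/Strichartz-type estimates. Organising the low/high frequency decomposition so that this cancellation survives the Taylor remainder, while controlling the product $\sigma'(u)\cdot($high-frequency part$)$ uniformly in $\tau$, is the delicate combinatorial part of the argument; once it is in place, the discrete Gronwall closes directly and yields the three cases of the theorem.
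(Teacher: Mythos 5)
Your global framework---one-step comparison of \eqref{eq:filtered_1st_scheme} with \eqref{mild_form}, Taylor expansion of $\Sigma$ at $e^{sL}U(t_n)$, It\^o isometry to make the cross terms vanish, the filter $\Pi_{\tau^{-1}}$ to trade regularity for powers of $\tau$, and discrete Gronwall---is exactly the paper's, and your predicted local-error sizes ($\tau^{1+4\gamma}$ for $d=1,2$ and $\tau^{1+\max(2\gamma,4\gamma-1)}$ for $d=3$) agree with \eqref{eq:estimate_R_tn}. There is, however, one load-bearing step you have not identified, and the mechanism you propose for the gain does not supply it. After expanding $\Sigma(U(t_n+s))$ to second order at $e^{sL}U(t_n)$, the quadratic remainder is harmless ($O(\tau^3)$ in mean square, by the fourth-moment version of \eqref{eq:new_approximation}), but the \emph{first-order} term $I_2(t_n)=\int_0^\tau e^{(\tau-s)L}\Sigma'(e^{sL}U(t_n))\bigl(U(t_n+s)-e^{sL}U(t_n)\bigr)\,\d W(t_n+s)$ is a genuine iterated stochastic integral. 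Estimating it directly with \eqref{eq:new_approximation} gives only $\mathbb{E}\|I_2\|_0^2\lesssim\tau^2$, hence $O(\tau^{1/2})$ for the global root-mean-square error after summation---which already fails to reach $\tau^{2\gamma-}$ for $\gamma\in(\tfrac14,\tfrac12]$ and fails in all of the third regime. The paper's fix is purely algebraic: since $\Sigma'$ acts only through the first component of its argument while $\Sigma$ has vanishing first component, one has $\Sigma'(e^{sL}U(t_n))\Sigma(U(t_n+\delta))\equiv 0$, so the inner integral may be rewritten with the factor $(e^{(s-\delta)L}-1)$ as in \eqref{eq:rewrite_I2}; this gains an extra power of $(s-\delta)$ and yields $\mathbb{E}\|I_2\|_1^2\lesssim\tau^4$ (Lemma \ref{lem:1_norm_I2}). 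Without this cancellation your scheme of proof stalls at half order.

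Second, the cancellation you invoke for the main term---oscillatory gains from the wave propagator exploited via Strichartz-type estimates---is not what is actually used, and it is doubtful it could be made to work at this generality. The operative mechanism is the identity \eqref{eq:dt_G}, $\frac{\d}{\d s}e^{-sL}\Sigma(e^{sL}U)=e^{-sL}\bigl(-\sigma(\tilde u),\,\sigma'(\tilde u)\tilde v\bigr)^{\!\top}$, which shows that differentiating the twisted integrand costs only the product $\sigma'(\tilde u)\tilde v$ measured in $H^{-1}$; the required bounds $\|\sigma'(\Pi_N u)\Pi_N v\|_{H^{-1}}\lesssim N^{1-2\gamma+}$ for $d=1,2$ and $N^{3/2-2\gamma+}$ for $d=3$ are then obtained by a dyadic telescoping in the frequency cutoff of $u$ combined with negative-norm product estimates (Lemmas \ref{lem:H_estimate}--\ref{lem:H_estimate_3d}), not by dispersive estimates. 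These two ingredients---the algebraic vanishing of $\Sigma'\Sigma$ and the dyadic negative-norm product bounds---are what your plan needs to close.
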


The proof of Theorem \ref{thm:0_norm_error_rough_data} is presented in the following sections. Since the main difficulty in constructing the numerical algorithm arises from the multiplicative noise term $\Sigma(u)\partial_{t}W$, we will focus on the case $f(u)\equiv 0$ in the rest of this paper in order to make our idea clearer and to simplify the presentation. For the general case $f(u)\neq 0$, low-regular integrators can be constructed and analyzed in the similar way. 

\section{Preliminary estimates for the error analysis}\label{section:preliminary}

In this section, we present several technical lemmas that will be used in the subsequent section for the construction and analysis of the numerical scheme.

\begin{lemma}\label{lemma_1}
	Let $ U $ be the unique solution of \eqref{system} under the condition \eqref{sigma_Lipschitz}, with initial data $ U^{0} \in H^{\gamma}(\mathcal{O}) \times H^{\gamma - 1}(\mathcal{O}) $ for $ \gamma \geq 0 $. Then, for any $ t_{n} $ and $ 0 \leq s \leq \tau $, we have the following estimates:
	\begin{equation}\label{1st_order_error}
		\mathbb{E}\left[ \| U(t_{n} + s) - e^{sL} U(t_{n}) \|_1^2 \right] \leq C \tau,
	\end{equation}
	and
	\begin{equation}\label{2nd_order_error}
		\mathbb{E}\left[ \| U(t_{n} + s) - e^{sL} U(t_{n}) \|_1^4 \right] \leq C \tau^2,
	\end{equation}
	where $ C $ is a constant that depends only on $ \sigma $, $ K(2) $, and $ K(4) $.
\end{lemma}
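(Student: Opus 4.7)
My starting point will be the mild formulation \eqref{mild_form}, which writes
\[
U(t_n+s)-e^{sL}U(t_n)=\int_0^s e^{(s-\delta)L}F(U(t_n+\delta))\,\d\delta
+\int_0^s e^{(s-\delta)L}\Sigma(U(t_n+\delta))\,\d W(t_n+\delta),
\]
so that the two estimates reduce to second- and fourth-moment bounds on these two integrals. I would split $\|\cdot\|_1^2\le 2A+2B$ (and $\|\cdot\|_1^4\le 8A^2+8B^2$) where $A$ is the deterministic drift term and $B$ the stochastic convolution term.

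The key structural remark is that both $F(U)$ and $\Sigma(U)$ have vanishing first component, so their $\|\cdot\|_1$ norm reduces to the $L^2$ norm of the second component: $\|F(U)\|_1=\|f(u)\|_{L^2}$, $\|\Sigma(U)\|_1=\|\sigma(u)\|_{L^2}$. Combined with \eqref{semigroup} at $r=1$, this gives
\[
\|e^{(s-\delta)L}F(U(t_n+\delta))\|_1\lesssim \|f(u(t_n+\delta))\|_{L^2},
\qquad
\|e^{(s-\delta)L}\Sigma(U(t_n+\delta))\|_1\lesssim \|\sigma(u(t_n+\delta))\|_{L^2}.
\]
Because $\mathcal{O}$ is bounded and $\gamma\ge 0$, the Lipschitz bounds implied by \eqref{sigma_Lipschitz} yield $\|f(u)\|_{L^2}+\|\sigma(u)\|_{L^2}\lesssim 1+\|u\|_{L^2}\lesssim 1+\|u\|_{H^\gamma}$, after which \eqref{Lp_bound} provides the second- and fourth-moment control in $\omega$. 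This observation is what makes the lemma possible in spite of the rough initial data: although $U(t_n+s)$ itself lives only in $H^\gamma\times H^{\gamma-1}$, the \emph{difference} $U(t_n+s)-e^{sL}U(t_n)$ is driven only by the nonlinearities, which have the regularizing zero-first-component structure.

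For \eqref{1st_order_error} I would then apply Cauchy--Schwarz in time on the drift term, giving $\mathbb{E}[A]\lesssim s\int_0^s \mathbb{E}\|f(u)\|_{L^2}^2\,\d\delta\lesssim\tau^2$, and It\^o's isometry on the stochastic term, giving $\mathbb{E}[B]=\int_0^s \mathbb{E}\|e^{(s-\delta)L}\Sigma(U)\|_1^2\,\d\delta\lesssim\tau$; summing and using $\tau\le 1$ yields the bound $C\tau$. For \eqref{2nd_order_error} I would use H\"older in time to get $\mathbb{E}[A^2]\lesssim s^3\int_0^s \mathbb{E}\|f(u)\|_{L^2}^4\,\d\delta\lesssim\tau^4$, and the Burkholder--Davis--Gundy inequality with $p=4$ to get
\[
\mathbb{E}[B^2]\lesssim\mathbb{E}\Bigl[\bigl(\textstyle\int_0^s\|e^{(s-\delta)L}\Sigma(U)\|_1^2\,\d\delta\bigr)^2\Bigr]
\lesssim s\int_0^s \mathbb{E}\|\sigma(u)\|_{L^2}^4\,\d\delta\lesssim\tau^2,
\]
where the outer Cauchy--Schwarz costs one extra factor of $s$. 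Together these deliver $\mathbb{E}[\|U(t_n+s)-e^{sL}U(t_n)\|_1^4]\lesssim\tau^2$.

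I do not expect any serious analytic obstacle; the only point requiring care is verifying that the zero-first-component structure of $F$ and $\Sigma$ combined with \eqref{semigroup} really yields a \emph{uniform} (in $s\in[0,\tau]$) bound of $\|e^{sL}(0,g)^\top\|_1$ by $\|g\|_{L^2}$---this is standard since $\sin(s\sqrt{-\Delta})/\sqrt{-\Delta}\colon L^2\to H^1$ is bounded uniformly for $s$ in a compact interval on the torus (including zero frequency via the Poincar\'e-adjusted inner product implicit in the $H^1$ norm used here). With that verified, the moment bounds from \eqref{Lp_bound} with $p=2$ and $p=4$ close the argument.
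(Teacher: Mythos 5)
Your proposal is correct and follows essentially the same route as the paper: the mild formulation, the observation that $\|\Sigma(U)\|_1=\|\sigma(u)\|_{L^2}$ combined with \eqref{semigroup} and \eqref{sigma_Lipschitz}, It\^{o}'s isometry for the second moment, and the Burkholder--Davis--Gundy inequality for the fourth moment, closed by the moment bound \eqref{Lp_bound}. The only difference is that you also carry the drift term $F$ through Cauchy--Schwarz in time, whereas the paper works under its standing simplification $f\equiv 0$ and treats only the stochastic convolution; your extra term is handled correctly and yields the even smaller contributions $\tau^2$ and $\tau^4$.
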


\begin{proof}
Using the equality \eqref{mild_form}, the property \eqref{semigroup}, and It\^{o}'s isometry, we obtain the following:

\begin{align}\label{eq:2-2}
	\mathbb{E}\left[\|U(t_{n}+s) - e^{sL} U(t_{n})\|_{1}^{2}\right] &\leq \mathbb{E}\left[\int_{0}^{s} \| e^{(s - \delta)L} \Sigma\left(U(t + \delta)\right) \|_{1}^{2} \, d\delta\right]\notag \\
	&= \mathbb{E}\left[\int_{0}^{s} \| \Sigma\left(U(t + \delta)\right) \|_{1}^{2} \, d\delta\right] \\
	&= \mathbb{E}\left[\int_{0}^{s} \| \sigma\left(u(t + \delta)\right) \|_{L^{2}}^{2} \, d\delta\right].\notag
\end{align}

Using the condition \eqref{sigma_Lipschitz}, we get the following estimate:

\begin{align}\label{eq:2-3}
	\mathbb{E}\left[\int_{0}^{s} \| \sigma\left(u(t + \delta)\right) \|_{L^{2}}^{2} \, d\delta\right] &\lesssim \mathbb{E}\left[\int_{0}^{s} \| \sigma(0) \|_{L^{2}}^{2} + \| u(t_{n} + \delta) \|_{L^{2}}^{2} \, d\delta \right] \notag\\
	&\lesssim \tau \left( 1 + \mathbb{E}\left[ \sup_{0 \leq \delta \leq s} \| U(t_{n} + \delta) \|_{\gamma}^{2} \right] \right),
\end{align}
for each \(\gamma \geq 0\). Combining \eqref{eq:2-2} and \eqref{eq:2-3}, we obtain the estimate \eqref{1st_order_error}.

For the proof of \eqref{2nd_order_error}, recalling the variation-of-constant formula, we apply the Burkholder-Davis-Gundy (B-D-G) inequality (see, e.g., \cite{Karatzas}, Section 3.3, Theorem 3.28) to obtain

\begin{align}\label{eq:2-5}
	\mathbb{E}\left[\|U(t_{n}+s) - e^{sL} U(t_{n})\|_{1}^{4}\right] &= \mathbb{E}\left[\left\| \int_0^s e^{(s-\delta)L} \Sigma(U(t_n) + \delta) \, dW(t_n + \delta) \right\|_{1}^{4}\right] \notag \\
	&\lesssim \mathbb{E}\left[\left( \int_{0}^{s} \| e^{(s-\delta)L} \Sigma\left(U(t+\delta)\right) \|_{1}^{2} \, d\delta \right)^{2}\right] \notag \\
	&\lesssim \tau^{2} \cdot \left( 1 + \mathbb{E}\left[ \sup_{0 \leq \delta \leq s} \| U(t_{n} + \delta) \|_{\gamma}^{4} \right] \right),
\end{align}
for each $\gamma \geq 0$, which yields \eqref{2nd_order_error}.
\end{proof}

To construct low-regularity integrators for \eqref{system}, it is crucial to utilize the following calculations for the time derivatives of the auxiliary functions, as established in \cite{LSZ}.

\begin{lemma}\label{lem:cancellation_structure}
	Let $ U = (u,v)^\top $ and $ \tilde{U}(s) := e^{sL} U = (\tilde{u}(s), \tilde{v}(s))^\top $. Then the following identity holds:
	\begin{equation}\label{eq:dt_G}
		\frac{\d}{\d s} e^{-sL} \Sigma(\tilde{U}(s)) = e^{-sL} \begin{pmatrix}
			-\sigma(\tilde{u}(s)) \\[3mm]
			\sigma'(\tilde{u}(s)) \tilde{v}(s)
		\end{pmatrix}.
	\end{equation}
\end{lemma}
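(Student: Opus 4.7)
The plan is to compute $\tfrac{\d}{\d s} e^{-sL}\Sigma(\tilde U(s))$ via the product rule, using that $e^{-sL}$ commutes with $L$ and that $\tilde U(s)$ solves the linear wave system $\partial_s\tilde U = L\tilde U$.

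First I would write
\begin{equation*}
\frac{\d}{\d s}\bigl(e^{-sL}\Sigma(\tilde U(s))\bigr)
= -L\,e^{-sL}\Sigma(\tilde U(s))+e^{-sL}\frac{\d}{\d s}\Sigma(\tilde U(s))
= e^{-sL}\Bigl(-L\,\Sigma(\tilde U(s))+\frac{\d}{\d s}\Sigma(\tilde U(s))\Bigr).
\end{equation*}
Since $\Sigma(\tilde U(s))=(0,\sigma(\tilde u(s)))^\top$ and $L=\begin{pmatrix}0&1\\ \Delta&0\end{pmatrix}$, the first inner term is simply
\begin{equation*}
L\,\Sigma(\tilde U(s))=\begin{pmatrix}\sigma(\tilde u(s))\\ 0\end{pmatrix}.
\end{equation*}

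Next, using $\partial_s\tilde U=L\tilde U$, in particular $\partial_s\tilde u(s)=\tilde v(s)$, the chain rule gives
\begin{equation*}
\frac{\d}{\d s}\Sigma(\tilde U(s))
=\begin{pmatrix}0\\ \sigma'(\tilde u(s))\,\partial_s\tilde u(s)\end{pmatrix}
=\begin{pmatrix}0\\ \sigma'(\tilde u(s))\,\tilde v(s)\end{pmatrix}.
\end{equation*}
Combining the two pieces inside the bracket yields $\bigl(-\sigma(\tilde u(s)),\sigma'(\tilde u(s))\tilde v(s)\bigr)^\top$, which proves \eqref{eq:dt_G}.

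The only subtlety worth flagging is justifying the commutation of $e^{-sL}$ with $L$ and the differentiation under $e^{-sL}$ at the required regularity level; this is standard semigroup theory given that $e^{sL}$ is the unitary-type group generated by $L$ on $H^r\times H^{r-1}$, and since we are only manipulating a formal identity in the sense of \eqref{mild_form}, no further obstacle is expected. Hence the main step is simply the product rule plus the direct matrix computation above.
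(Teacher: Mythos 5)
Your computation is correct and is exactly the standard argument behind this identity: the paper itself gives no proof, simply citing \cite{LSZ}, and the product rule combined with $L\Sigma(\tilde U)=(\sigma(\tilde u),0)^\top$ and $\partial_s\tilde u=\tilde v$ is precisely what that reference does. Your closing remark about commuting $e^{-sL}$ with $L$ and differentiating under the semigroup is the right subtlety to flag, and it is indeed harmless here since the identity is only ever used inside integral (mild-form) expressions such as \eqref{eq:R_22}.
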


We now present some useful results for estimating the consistency errors.

\begin{lemma}[{Bernstein's inequality; cf. \cite[Theorem 2.2 and pp. 22]{Guo1998}}]\label{bound}
	Let $ f $ be a function such that $ \mathcal{J}^\gamma f := (1 - \Delta)^{\frac{\gamma}{2}} f \in L^p(\mathcal{O}) $ for some $ \gamma \geq 0 $ and $ 1 < p < \infty $. Then the following estimates hold:
	\begin{align*}
		\| \Pi_{\leq N} \mathcal{J}^\gamma f \|_{L^p} &\lesssim N^\gamma \| f \|_{L^p}, \\
		\| \Pi_{> N} f \|_{L^p} &\lesssim N^{-\gamma} \| \mathcal{J}^\gamma f \|_{L^p}.
	\end{align*}
\end{lemma}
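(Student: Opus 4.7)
The plan is to prove Bernstein's inequality by recasting both estimates as Fourier multiplier bounds on the torus and applying Young's convolution inequality. In each case the operator in question is of the form $T_N f = K_N * f$ for some translation-invariant kernel $K_N$ built from a symbol supported near (or cut off at) frequencies of size $N$; so once I can show $\|K_N\|_{L^1(\mathcal{O})} \lesssim N^{\pm\gamma}$, Young's inequality $\|K_N * f\|_{L^p} \leq \|K_N\|_{L^1}\|f\|_{L^p}$ delivers the claim for all $1 < p < \infty$ (indeed for all $1\le p\le\infty$).

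For the first inequality I would introduce a smooth cutoff $\phi\in C_c^\infty(\mathbb{R}^d)$ with $\phi\equiv 1$ on $\{|\xi|\leq 1\}$ and $\mathrm{supp}\,\phi\subset\{|\xi|\leq 2\}$, and compare $\Pi_{\le N}\mathcal J^\gamma$ with the smooth multiplier operator whose symbol is $m_N(\xi)=\phi(\xi/N)(1+|\xi|^2)^{\gamma/2}$. The Poisson summation formula represents the associated periodic kernel as $\tilde K_N(x)=\sum_{\ell\in\mathbb Z^d}\widehat{m_N}(x+\ell)$, and repeated integration by parts on $\mathbb{R}^d$ yields the pointwise bound $|\widehat{m_N}(y)|\lesssim N^{\gamma+d}(1+N|y|)^{-M}$ for any $M$, hence $\|\tilde K_N\|_{L^1(\mathcal O)}\lesssim N^\gamma$. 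Since $\phi(k/N)=1$ on $\{|k|_\infty\le N\}$, we have $\Pi_{\le N}\mathcal J^\gamma f=\Pi_{\le N}(\tilde K_N * f)$, and the first estimate follows provided $\Pi_{\le N}$ is bounded on $L^p(\mathcal O)$ uniformly in $N$.

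For the second inequality I would factor $\Pi_{>N}f=\mathcal J^{-\gamma}\big(\Pi_{>N}\mathcal J^\gamma f\big)$ and, after an analogous smooth truncation, handle the multiplier $(1-\phi(\xi/N))(1+|\xi|^2)^{-\gamma/2}$, whose derivatives decay like $N^{-\gamma}\cdot|\xi|^{-|\alpha|}$ on its support; the same Poisson-summation/integration-by-parts computation yields the kernel $L^1$-bound of order $N^{-\gamma}$, and then Young's inequality concludes the proof.

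The main obstacle is that the sharp frequency projector $\Pi_{\le N}$ is \emph{not} a convolution against an $L^1$ kernel (its Dirichlet kernel has $L^1$ norm of logarithmic size in $N$), so one cannot directly apply Young. The resolution is the uniform $L^p$-boundedness of the sharp cube projector on $\mathbb T^d$ for $1<p<\infty$, which follows from the Marcinkiewicz (or Hörmander--Mihlin) multiplier theorem applied on each coordinate direction, and which is precisely the reason for the restriction $1<p<\infty$ in the statement. Combining this $L^p$-boundedness with the smooth-kernel estimates above completes both bounds; the argument is essentially the one in \cite{Guo1998} adapted to the cube $\mathcal O=[0,1]^d$.
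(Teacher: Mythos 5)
First, a remark on the comparison itself: the paper does not prove this lemma at all --- it is quoted directly from Guo (1998) --- so there is no internal argument to measure yours against. Your multiplier-theoretic route is the standard textbook one, and you correctly isolate the one genuinely $p$-dependent ingredient: the uniform $L^p$-boundedness of the sharp cube projector $\Pi_{\le N}$ for $1<p<\infty$, obtained coordinate-wise from the Riesz projection (Marcinkiewicz) theorem. That part of the argument is sound.

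There is, however, a genuine gap in the kernel estimate. The claimed pointwise bound $|\widehat{m_N}(y)|\lesssim N^{\gamma+d}(1+N|y|)^{-M}$ for \emph{every} $M$ requires, via integration by parts, that $\|\partial^\alpha m_N\|_{L^1(\mathbb{R}^d)}\lesssim N^{\gamma+d-|\alpha|}$, i.e.\ $|\partial^\alpha m_N(\xi)|\lesssim N^{\gamma-|\alpha|}$ on the support. This fails for $|\alpha|>\gamma$ unless $\gamma$ is an even integer: near $\xi=0$ one has $\partial^\alpha\big[(1+|\xi|^2)^{\gamma/2}\big]=O(1)$, whereas $N^{\gamma-|\alpha|}\to 0$. (Concretely, for $d=1$, $\gamma=1$, $|\alpha|=2$: $\partial_\xi^2(1+\xi^2)^{1/2}=(1+\xi^2)^{-3/2}$ equals $1$ at $\xi=0$, while the required bound is $N^{-1}$.) Since the paper invokes the lemma precisely for fractional $\gamma\in(0,\tfrac34]$, this is the relevant regime; the same issue recurs in your second estimate, where moreover $(1-\phi(\xi/N))(1+|\xi|^2)^{-\gamma/2}\notin L^1_\xi$ for $\gamma\le d$, so even the trivial sup-bound on the kernel is unavailable. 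The standard repair is a dyadic decomposition of the symbol: write $\phi(\xi/N)=\phi(\xi)+\sum_{j=1}^{\log_2 N}\big(\phi(2^{-j}\xi)-\phi(2^{-j+1}\xi)\big)$; each piece lives on an annulus $|\xi|\sim 2^j$ where the rescaled symbol \emph{is} a uniformly bounded bump times $2^{\pm j\gamma}$, so its kernel has $L^1$-norm $\lesssim 2^{\pm j\gamma}$, and the geometric series (convergent precisely because $\gamma>0$, resp.\ $-\gamma<0$) sums to $N^{\gamma}$, resp.\ $N^{-\gamma}$. Equivalently, observe that $N^{-\gamma}m_N$ (resp.\ $N^{\gamma}m$) is a H\"ormander--Mihlin symbol uniformly in $N$ and apply the multiplier theorem directly, which also removes any need for the kernel to lie in $L^1$. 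With either repair your argument closes.
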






\begin{lemma}[Negative-norm estimates for the product of two functions when $ d = 3 $]\label{lemma_neg_norm_3d}
	For $ d = 3 $, the following estimates hold: 
	\begin{align}
		\label{dualargu2_3d}
		\| fg \|_{H^{-1}} &\lesssim \| g \|_{H^{-1}} \left( \| f \|_{L^\infty} + \| f \|_{H^{\frac{3}{2}}} \right), \\
		\label{embedding3_3d}
		\| fg \|_{H^{-1}} &\lesssim \| f \|_{H^{\frac{1}{2}}}^{\gamma} \| f \|_{H^{\frac{3}{2}+}}^{1 - \gamma} \| g \|_{H^{\gamma - 1}}.
	\end{align}
\end{lemma}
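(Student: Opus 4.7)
The plan is to prove both estimates by duality: writing $\|fg\|_{H^{-1}} = \sup_{\|\phi\|_{H^1} \leq 1}|\langle fg,\phi\rangle| = \sup_{\|\phi\|_{H^1}\leq 1}|\langle g, f\phi\rangle|$, one reduces \eqref{dualargu2_3d} to the bound $\|f\phi\|_{H^1} \lesssim (\|f\|_{L^\infty}+\|f\|_{H^{3/2}})\|\phi\|_{H^1}$, and \eqref{embedding3_3d} to the bound $\|f\phi\|_{H^{1-\gamma}} \lesssim \|f\|_{H^{1/2}}^{\gamma}\|f\|_{H^{3/2+}}^{1-\gamma}\|\phi\|_{H^1}$, via the pairing $|\langle g, f\phi\rangle| \leq \|g\|_{H^{\gamma-1}}\|f\phi\|_{H^{1-\gamma}}$. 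After that everything reduces to product estimates and Sobolev embeddings on $\mathcal{O}=[0,1]^3$ with periodic boundary condition.

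For \eqref{dualargu2_3d}, I would expand $\|f\phi\|_{H^1}^2 \lesssim \|f\phi\|_{L^2}^2 + \|\nabla(f\phi)\|_{L^2}^2$ using the Leibniz rule. The terms $\|f\phi\|_{L^2}$ and $\|f\nabla\phi\|_{L^2}$ are both controlled by $\|f\|_{L^\infty}\|\phi\|_{H^1}$. The genuinely three-dimensional term is the cross term $\|\nabla f\cdot \phi\|_{L^2}$, which I would handle by Hölder with exponents $(3,6)$:
\begin{equation*}
\|\nabla f\cdot \phi\|_{L^2} \leq \|\nabla f\|_{L^3}\|\phi\|_{L^6} \lesssim \|\nabla f\|_{H^{1/2}}\|\phi\|_{H^1} \lesssim \|f\|_{H^{3/2}}\|\phi\|_{H^1},
\end{equation*}
using the $3$D embeddings $H^{1/2}\hookrightarrow L^3$ and $H^1\hookrightarrow L^6$. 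Combining yields \eqref{dualargu2_3d}.

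For \eqref{embedding3_3d}, I would interpolate the fractional norm between the endpoints $H^0$ and $H^1$:
\begin{equation*}
\|f\phi\|_{H^{1-\gamma}} \leq \|f\phi\|_{L^2}^{\gamma}\|f\phi\|_{H^1}^{1-\gamma}.
\end{equation*}
The $L^2$ endpoint is controlled by Hölder with exponents $(3,6)$ and the embedding $H^{1/2}\hookrightarrow L^3$:
\begin{equation*}
\|f\phi\|_{L^2} \leq \|f\|_{L^3}\|\phi\|_{L^6} \lesssim \|f\|_{H^{1/2}}\|\phi\|_{H^1}.
\end{equation*}
For the $H^1$ endpoint I apply \eqref{dualargu2_3d} (already established) together with the $3$D embedding $H^{3/2+}\hookrightarrow L^\infty$, yielding $\|f\phi\|_{H^1}\lesssim \|f\|_{H^{3/2+}}\|\phi\|_{H^1}$. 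Plugging both endpoints into the interpolation inequality and combining gives \eqref{embedding3_3d}.

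The technical subtleties rather than the real obstacles are: (i) ensuring the Sobolev embeddings and interpolation inequalities are available in the periodic setting on $\mathcal{O}=[0,1]^3$, which is standard since $\mathcal{J}^\gamma=(1-\Delta)^{\gamma/2}$ is defined via Fourier series and complex interpolation between $H^0$ and $H^1$ yields $H^{1-\gamma}$; (ii) choosing the appropriate pairing of Hölder exponents so that $f$ absorbs the "rough" regularity while $\phi$ uses the full $H^1$ control. I expect no essential difficulty beyond bookkeeping — the estimates are direct consequences of duality, the Kato-Ponce-type product rule, and the two $3$D embeddings $H^{1/2}\hookrightarrow L^3$ and $H^{3/2+}\hookrightarrow L^\infty$.
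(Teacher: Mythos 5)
Your proposal is correct and follows essentially the same route as the paper: duality, the Leibniz rule with H\"older exponents $(3,6)$ and the embeddings $H^{\frac12}\hookrightarrow L^3$, $H^1\hookrightarrow L^6$ for \eqref{dualargu2_3d}, and interpolation between the $L^2$ and $H^1$ endpoints of $\|f\phi\|_{H^{1-\gamma}}$ for \eqref{embedding3_3d}. The only cosmetic difference is that you reuse the product bound from \eqref{dualargu2_3d} (combined with $H^{\frac32+}\hookrightarrow L^\infty$) for the $H^1$ endpoint, whereas the paper re-derives it inline; the content is identical.
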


\begin{proof}
	By the dual argument and Sobolev embedding in three dimensions (i.e., $ H^1 \hookrightarrow L^6 $, and $ H^{\frac{1}{2}} \hookrightarrow L^3 $), we have
	\begin{align*}
		\| fg \|_{H^{-1}} &= \sup_{\| w \|_{H^1} = 1} \langle g, fw \rangle \lesssim \sup_{\| w \|_{H^1} = 1} \| g \|_{H^{-1}} \| fw \|_{H^1} \\
		&\lesssim \sup_{\| w \|_{H^1} = 1} \| g \|_{H^{-1}} \left( \| f w \|_{L^2} + \| f \nabla w \|_{L^2} + \| w \nabla f \|_{L^2} \right) \\
		&\lesssim \sup_{\| w \|_{H^1} = 1} \| g \|_{H^{-1}} \left( \| f \|_{L^\infty} \| w \|_{H^1} + \| w \|_{L^6} \| \nabla f \|_{L^3} \right) \\
		&\lesssim \sup_{\| w \|_{H^1} = 1} \| g \|_{H^{-1}} \left( \| f \|_{L^\infty} + \| f \|_{H^{\frac{3}{2}}} \right) \| w \|_{H^1}.
	\end{align*}
	
	Similarly, for \eqref{embedding3_3d}, we have
	\begin{align}\label{proof0}
		\| fg \|_{H^{-1}} &= \sup_{\| w \|_{H^1} = 1} \langle g, fw \rangle 
		\lesssim \sup_{\| w \|_{H^1} = 1} \| g \|_{H^{\gamma - 1}} \| fw \|_{H^{1 - \gamma}}.
	\end{align}
	
	By the interpolation inequality and embeddings $ H^{\frac{3}{2}+} \hookrightarrow L^\infty $, $ H^1 \hookrightarrow L^6 $, and $ H^{\frac{1}{2}} \hookrightarrow L^3 $, we obtain
	\begin{align}\label{proof2}
		\| fw \|_{H^{1 - \gamma}} &\lesssim \| fw \|_{L^2}^{\gamma} \| fw \|_{H^1}^{1 - \gamma} \\
		&\lesssim \| f \|_{L^3}^{\gamma} \| w \|_{L^6}^{\gamma} \left[ \| \mathcal{J} f \|_{L^3}^{1 - \gamma} \| w \|_{L^6}^{1 - \gamma} + \| f \|_{L^\infty}^{1 - \gamma} \| \mathcal{J} w \|_{L^2}^{1 - \gamma} \right] \\
		&\lesssim \| f \|_{H^{\frac{1}{2}}}^{\gamma} \| f \|_{H^{\frac{3}{2}+}}^{1 - \gamma} \| w \|_{H^1}.
	\end{align}
	
	Substituting \eqref{proof2} into \eqref{proof0} leads to \eqref{embedding3_3d}.
\end{proof}

\begin{lemma}[Negative-norm estimates for the composition functions when $d=1,2$; see \cite{CLLY2024}]\label{lem:H_estimate}
	Let $d = 1,2$, and $(u,v)^\top \in H^{\gamma}(\mathcal{O}) \times H^{\gamma-1}(\mathcal{O})$. Assume that $\sigma$ satisfies the conditions in \eqref{sigma_Lipschitz}. Then, the following estimates hold:
	\begin{align}\label{gamma_leq_frac12}
		\|\sigma^{\prime}(\Pi_N u)\Pi_N v\|_{H^{-1}} &\lesssim N^{1-2\gamma+}, 
		&& \text{for } \gamma \in (0,\tfrac{1}{2}], \\ 
		\label{gamma_greater_frac12}
		\|\sigma^{\prime}(\Pi_N u)\Pi_N v\|_{H^{-1}} &\lesssim 1, 
		&& \text{for } \gamma \in (\tfrac{1}{2},1].
	\end{align}
\end{lemma}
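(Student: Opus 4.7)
The plan is to start from the dual characterization of the $H^{-1}$ norm,
\begin{align*}
\|\sigma'(\Pi_N u)\Pi_N v\|_{H^{-1}} = \sup_{\|w\|_{H^1}=1}\int_{\mathcal{O}}\sigma'(\Pi_N u)(\Pi_N v)\,w\,dx,
\end{align*}
and move one factor onto the test function, giving
\begin{align*}
\|\sigma'(\Pi_N u)\Pi_N v\|_{H^{-1}} \leq \|\Pi_N v\|_{H^{\gamma-1}}\,\sup_{\|w\|_{H^1}=1}\|\sigma'(\Pi_N u)\,w\|_{H^{1-\gamma}},
\end{align*}
via the pairing between $H^{\gamma-1}$ and $H^{1-\gamma}$. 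Since $\|\Pi_N v\|_{H^{\gamma-1}}\leq\|v\|_{H^{\gamma-1}}$ is harmless, the proof reduces to bounding the Sobolev product $\|\sigma'(\Pi_N u)\,w\|_{H^{1-\gamma}}$ in terms of $N$ and $\|w\|_{H^1}$.

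\textbf{Main case $\gamma\in(0,1/2]$.} Here $1-\gamma\in[1/2,1)$. I would apply a fractional-Leibniz/Moser-type product estimate valid in $d=1,2$,
\begin{align*}
\|fw\|_{H^{1-\gamma}} \lesssim \|f\|_{L^{\infty}}\|w\|_{H^{1-\gamma}} + \|f\|_{H^{1-\gamma+\varepsilon}}\|w\|_{H^{1}},
\end{align*}
for arbitrarily small $\varepsilon>0$, chosen so that $1-\gamma+\varepsilon$ exceeds $d/2$ minus a controllable Sobolev-embedding slack (this shift is needed since in $d=2$ with $\gamma=1/2$ the index $H^{1/2}$ is neither an algebra nor embedded in $L^\infty$). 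Plugging in $f=\sigma'(\Pi_N u)$, condition \eqref{sigma_Lipschitz} yields $\|\sigma'(\Pi_N u)\|_{L^{\infty}}\lesssim 1$, while a standard composition estimate in fractional Sobolev spaces gives $\|\sigma'(\Pi_N u)\|_{H^{1-\gamma+\varepsilon}}\lesssim 1+\|\Pi_N u\|_{H^{1-\gamma+\varepsilon}}$. Bernstein's inequality (Lemma~\ref{bound}) then supplies
\begin{align*}
\|\Pi_N u\|_{H^{1-\gamma+\varepsilon}}\lesssim N^{(1-\gamma+\varepsilon)-\gamma}\|u\|_{H^{\gamma}} = N^{1-2\gamma+\varepsilon}\|u\|_{H^{\gamma}},
\end{align*}
and absorbing $\varepsilon$ into the ``$+$'' notation produces \eqref{gamma_leq_frac12}.

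\textbf{Subcritical case $\gamma\in(1/2,1]$.} In this regime $1-\gamma<1/2<\gamma$, so the product is sub-critical: for any $s_{1}\in(d/2-\gamma,\gamma]$, which is non-empty because $\gamma>d/4$ holds for $d\leq 2$, I would apply
\begin{align*}
\|fw\|_{H^{1-\gamma}}\lesssim \|f\|_{H^{s_{1}}}\|w\|_{H^{1}},
\end{align*}
together with the composition bound $\|\sigma'(\Pi_N u)\|_{H^{s_{1}}}\lesssim 1+\|\Pi_N u\|_{H^{s_{1}}}\leq 1+\|u\|_{H^{\gamma}}$ (since $s_{1}\leq\gamma$ makes the projection non-expansive in this norm). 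This gives a bound uniform in $N$ and proves \eqref{gamma_greater_frac12}.

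\textbf{Main obstacle.} The principal difficulty is the borderline case $d=2$, $\gamma=1/2$, where $1-\gamma=1/2$ is exactly the Sobolev embedding threshold in two dimensions; neither $L^{\infty}$-control of $w\in H^{1}$, nor the algebra property of $H^{1/2}$, nor $L^{\infty}$-bounds on $\sigma'(\Pi_N u)$ via $H^{1/2}$ are available. Handling this by an $\varepsilon$-enlargement of the regularity index and paying a Bernstein-factor $N^{\varepsilon}$ is precisely what produces the $N^{0+}$ loss at $\gamma=1/2$ and, more generally, the $N^{1-2\gamma+}$ rate in \eqref{gamma_leq_frac12}.
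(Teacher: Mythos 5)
Your argument is essentially correct, but it takes a genuinely different route from the one the paper relies on. The paper defers the $d=1,2$ proof to \cite{CLLY2024}, whose strategy is reproduced verbatim in the proof of the three-dimensional analogue (Lemma \ref{lem:H_estimate_3d}): one telescopes $\sigma'(\Pi_N u)$ into dyadic differences $\sigma'(\Pi_{2^{k+1}}u)-\sigma'(\Pi_{2^k}u)$, applies a bilinear negative-norm estimate of interpolation type (the analogue of \eqref{embedding3_3d}) to each block, controls the resulting low and high Sobolev norms of each frequency-localized difference by Bernstein, and sums the geometric series --- which simultaneously yields the $N^{1-2\gamma+}$ rate for small $\gamma$ and the uniform bound for $\gamma$ above the threshold, where the series converges. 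You instead argue in one shot: the same duality pairing between $H^{\gamma-1}$ and $H^{1-\gamma}$, then a Kato--Ponce-type product estimate that only involves $\|\sigma'(\Pi_N u)\|_{L^\infty}$ and $\|\sigma'(\Pi_N u)\|_{H^{1-\gamma+\varepsilon}}$, a first-order Moser composition bound, and a single application of Bernstein. This is cleaner and avoids the dyadic sum entirely, but it works precisely because in $d\le 2$ the index $1-\gamma+\varepsilon$ can be kept at or below $1$, so that $\|\sigma'(\Pi_N u)\|_{H^{1-\gamma+\varepsilon}}\lesssim 1+\|\Pi_N u\|_{H^{1-\gamma+\varepsilon}}$ follows from boundedness of $\sigma''$ alone; the paper's telescoping is what allows the same scheme to survive in $d=3$, where norms of the composition above $H^1$ are needed and a direct Moser estimate would produce uncontrolled quadratic gradient terms. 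Two small points to tighten: in the subcritical case your product estimate $\|fw\|_{H^{1-\gamma}}\lesssim\|f\|_{H^{s_1}}\|w\|_{H^1}$ also requires $s_1\ge 1-\gamma$ (automatic for $d=2$ but an extra constraint for $d=1$; the choice $s_1=\gamma$ satisfies it since $\gamma>\tfrac12$), and you should fix $\varepsilon<\gamma$ so that $1-\gamma+\varepsilon\le 1$ and the first-order composition estimate applies.
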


\begin{lemma}[Negative-norm estimates for the composition functions when $d=3$]\label{lem:H_estimate_3d}
	Let $d = 3$, and $(u,v)^\top \in H^{\gamma}(\mathcal{O}) \times H^{\gamma-1}(\mathcal{O})$. Assume that $\sigma$ satisfies the conditions in \eqref{sigma_Lipschitz}. Then, the following estimates hold:
	\begin{align}\label{gamma_leq_frac12_3d}
		\|\sigma^{\prime}(\Pi_N u)\Pi_N v\|_{H^{-1}} &\lesssim N^{\frac{3}{2}-2\gamma+}, 
		&& \text{for } \gamma \in (0,\tfrac{3}{4}], \\ 
		\label{gamma_greater_frac12_3d}
		\|\sigma^{\prime}(\Pi_N u)\Pi_N v\|_{H^{-1}} &\lesssim 1, 
		&& \text{for } \gamma \in (\tfrac{3}{4},1].
	\end{align}
\end{lemma}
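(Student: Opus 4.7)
The plan is to mirror the approach for $d=1,2$ from \cite{CLLY2024} (Lemma \ref{lem:H_estimate}), using the three-dimensional product estimates from Lemma \ref{lemma_neg_norm_3d} in place of their lower-dimensional analogues. Write $f := \sigma'(\Pi_N u)$. The auxiliary bounds I would collect first are $\|f\|_{L^\infty}\lesssim 1$ together with a Moser-type chain rule $\|f\|_{H^s}\lesssim 1+\|\Pi_N u\|_{H^s}$ for the fractional indices needed (both consequences of \eqref{sigma_Lipschitz}), and $\|\Pi_N u\|_{H^s}\lesssim N^{(s-\gamma)_+}\|u\|_{H^\gamma}$ from Bernstein's inequality (Lemma \ref{bound}). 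On the $v$-side, the $L^2$-boundedness of $\Pi_N$ gives $\|\Pi_N v\|_{H^{\alpha-1}}\lesssim \|v\|_{H^{\gamma-1}}\lesssim 1$ whenever $\alpha\le\gamma$.

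For the low-regularity range $\gamma\in(0,1/2]$, I would apply \eqref{embedding3_3d} with its interpolation parameter equal to $\gamma$. Substituting $\|f\|_{H^{1/2}}\lesssim N^{1/2-\gamma}$ and $\|f\|_{H^{3/2+}}\lesssim N^{3/2+\varepsilon-\gamma}$ yields
\begin{equation*}
\|f\,\Pi_N v\|_{H^{-1}}\lesssim N^{\gamma(1/2-\gamma)+(1-\gamma)(3/2+\varepsilon-\gamma)}\;=\;N^{3/2-2\gamma+\varepsilon(1-\gamma)},
\end{equation*}
which is \eqref{gamma_leq_frac12_3d} in this range. For the complementary range $\gamma\in(3/4,1]$, both $\|f\|_{H^{1/2}}\lesssim 1$ and $\|\Pi_N v\|_{H^{\gamma-1}}\lesssim 1$ are already $O(1)$; choosing the interpolation parameter to match the available regularity of $v$, or alternatively using \eqref{dualargu2_3d} directly with the improved Moser bound $\|f\|_{H^{3/2}}\lesssim 1$ available once $\gamma$ exceeds the cliff at $3/4$, delivers the uniform bound \eqref{gamma_greater_frac12_3d}.

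The intermediate range $\gamma\in(1/2,3/4]$ is the delicate part. A direct application of \eqref{embedding3_3d} with parameter $\gamma$ wastes regularity: since $\|f\|_{H^{1/2}}\lesssim 1$ is already saturated, raising it to a positive power produces the suboptimal exponent $(1-\gamma)(3/2+\varepsilon-\gamma)$, equal to $3/16+$ at $\gamma=3/4$ and missing the target $0+$. My plan to close this gap is to replace the geometric-mean interpolation by the sharp Kato--Ponce-type inequality
\begin{equation*}
\|fw\|_{H^{1-\gamma}}\lesssim \|f\|_{H^{3/2-\gamma+\varepsilon}}\|w\|_{H^1},
\end{equation*}
which is the three-dimensional fractional Leibniz rule at the exact index dual to $H^{\gamma-1}$; it can be proved by rerunning the argument leading to \eqref{embedding3_3d} but without splitting the smoothness between $H^{1/2}$ and $H^{3/2+}$. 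Combined with the Moser-plus-Bernstein bound $\|f\|_{H^{3/2-\gamma+\varepsilon}}\lesssim N^{(3/2-2\gamma+\varepsilon)_+}$ and duality against $\|\Pi_N v\|_{H^{\gamma-1}}$, this delivers the uniform estimate $N^{(3/2-2\gamma+\varepsilon)_+}$, which collapses to $N^{3/2-2\gamma+}$ on $(0,3/4]$ and to an $O(1)$ bound on $(3/4,1]$.

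The hard part is exactly this intermediate regime: the geometric interpolation baked into \eqref{embedding3_3d} pays a loss of order $\gamma^2-\gamma/2$ once $\|f\|_{H^{1/2}}$ saturates at $1$, so the claimed exponent provably cannot be reached from \eqref{embedding3_3d} alone through any single choice of its interpolation parameter. Hence a genuinely sharper product estimate at the dual index $1-\gamma$ is needed. The remaining technical care lies in justifying the Moser bound on $\|\sigma'(\Pi_N u)\|_{H^{3/2-\gamma+\varepsilon}}$ at this fractional index using \eqref{sigma_Lipschitz}, and in tracking the small $\varepsilon$-losses so that they all collapse into the single ``$+$'' appearing in the final exponent.
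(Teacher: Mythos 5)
Your proposal reaches the right estimates, and your diagnosis of where a naive use of \eqref{embedding3_3d} breaks down is exactly on target: once $\gamma>\tfrac12$ the factor $\|\sigma'(\Pi_N u)\|_{H^{1/2}}^{\gamma}$ saturates at $O(1)$ and the exponent degrades to $(1-\gamma)(\tfrac32-\gamma)+$, which misses $\tfrac32-2\gamma$ by $\gamma(\gamma-\tfrac12)$. However, your fix is genuinely different from the paper's. You repair the loss by upgrading the product estimate itself, proving a sharp Sobolev multiplication bound $\|fw\|_{H^{1-\gamma}}\lesssim\|f\|_{H^{3/2-\gamma+}}\|w\|_{H^{1}}$ at the dual index and combining it with a Moser--Bernstein bound on $\|\sigma'(\Pi_N u)\|_{H^{3/2-\gamma+}}$; this is a legitimate route (the indices satisfy the standard multiplication-theorem conditions in $d=3$), and it handles the full range $(0,1]$, including \eqref{gamma_greater_frac12_3d}, in one stroke. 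The paper instead keeps the weaker interpolated estimate \eqref{embedding3_3d} but applies it to the telescoping dyadic differences $\sigma'(\Pi_{2^{k+1}}u)-\sigma'(\Pi_{2^k}u)$: each difference lives essentially at frequency $2^k$, so its $H^{1/2}$ norm is genuinely small, of order $(2^k)^{1/2-\gamma}$ rather than $O(1)$, and each dyadic block contributes $(2^k)^{3/2-2\gamma+}$; summing the geometric series gives $N^{3/2-2\gamma+}\log_2 N$ for $\gamma\le\tfrac34$ and a convergent $O(1)$ sum for $\gamma>\tfrac34$. What each approach buys: yours avoids the dyadic bookkeeping and the logarithmic factor at the price of an additional bilinear lemma (and a composition estimate at the fractional index $\tfrac32-\gamma+$) that is not in the paper's toolkit; the paper's stays entirely within its own Lemma \ref{lemma_neg_norm_3d} and Lemma \ref{bound}, with the telescoping decomposition doing the work of recovering sharpness.

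One side remark in your proposal is incorrect and should be dropped: the claim that $\|\sigma'(\Pi_N u)\|_{H^{3/2}}\lesssim 1$ becomes available once $\gamma>\tfrac34$. By Bernstein and the chain rule this norm is only controlled by $N^{3/2-\gamma}\|u\|_{H^\gamma}$ for $\gamma<\tfrac32$, so the proposed shortcut via \eqref{dualargu2_3d} does not give \eqref{gamma_greater_frac12_3d}. This is harmless to your argument since your main Kato--Ponce route already covers $\gamma\in(\tfrac34,1]$ (there $\tfrac32-\gamma+\varepsilon<\gamma$, so the required norm of $\sigma'(\Pi_N u)$ is uniformly bounded), but the alternative you offer for that range would fail.
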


\begin{proof}
As in the proof of Lemma~\ref{lem:H_estimate} in \cite{CLLY2024}, we assume that $\log_2 N$ is an integer (otherwise, we replace $\log_2 N$ by the smallest integer larger than it). Using the triangle inequality, we have 
\begin{align}\label{star}
	\|\sigma^\prime(\Pi_N u) \cdot \Pi_N v\|_{H^{-1}}
	&\leq \sum_{k=1}^{\log_2 N - 1} 
	\|(\sigma^\prime(\Pi_{2^{k+1}} u) 
	- \sigma^\prime(\Pi_{2^k} u)) 
	\cdot \Pi_N v\|_{H^{-1}}.
\end{align}

The right-hand side of \eqref{star} can be estimated using \eqref{embedding3_3d}:
\begin{align*}
	&\quad \|(\sigma^\prime(\Pi_{2^{k+1}} u) 
	- \sigma^\prime(\Pi_{2^k} u)) 
	\cdot \Pi_N v\|_{H^{-1}} \\
	&\lesssim \|\sigma^\prime(\Pi_{2^{k+1}} u) 
	- \sigma^\prime(\Pi_{2^k} u)\|_{H^{\frac{1}{2}}}^\gamma 
	\|\sigma^\prime(\Pi_{2^{k+1}} u) 
	- \sigma^\prime(\Pi_{2^k} u)\|_{H^{\frac{3}{2}+}}^{1-\gamma} 
	\|\Pi_N v\|_{H^{\gamma - 1}} \\
	&\lesssim \|\sigma^\prime(\Pi_{2^{k+1}} u) 
	- \sigma^\prime(\Pi_{2^k} u)\|_{H^{\frac{1}{2}}}^\gamma 
	\|\sigma^\prime(\Pi_{2^{k+1}} u) 
	- \sigma^\prime(\Pi_{2^k} u)\|_{H^{1}}^{\frac{1-\gamma}{2}} 
	\|\sigma^\prime(\Pi_{2^{k+1}} u) 
	- \sigma^\prime(\Pi_{2^k} u)\|_{H^{2}}^{\frac{1-\gamma}{2}+} \\
	&\lesssim \big((2^k)^{\frac{1}{2} - \gamma}\big)^\gamma 
	\big((2^k)^{1 - \gamma}\big)^{\frac{1-\gamma}{2}} 
	\big((2^k)^{2 - \gamma}\big)^{\frac{1-\gamma}{2}+} \\
	&\lesssim (2^k)^{\frac{3}{2} - 2\gamma +}.
\end{align*}

For $\gamma \in (0, \frac{3}{4}]$, summing up the above estimate for $k = 1, \dots, \log_2 N - 1$ gives:
\begin{align}\label{proof_summation}
	\|\Pi_N(\sigma^\prime(\Pi_N u) \cdot \Pi_N v)\|_{H^{-1}} 
	&\lesssim \sum_{k=1}^{\log_2 N - 1} 
	(2^k)^{\frac{3}{2} - 2\gamma +} 
	\lesssim N^{\frac{3}{2} - 2\gamma +} \cdot \log_2 N.
\end{align}
This proves the first result of Lemma~\ref{lem:H_estimate_3d}.

For $\gamma \in (\frac{3}{4}, 1]$, note that $\frac{3}{2} - 2\gamma + < 0$, so the sum in \eqref{proof_summation} converges, and we obtain \eqref{gamma_greater_frac12_3d}, where the constant depends only on $\gamma$.

\hfill\hfill\end{proof}

\begin{lemma}[Negative-norm estimates for frequency localization]\label{lem:freq_local_err}
	Under the regularity condition \eqref{Lp_bound} for $U(t) = (u(t), v(t))^\top$, the $L^2(\mathcal{O}) \times H^{-1}(\mathcal{O})$ error between $\Sigma(U(t))$ and $\Sigma(\Pi_N U(t))$ for any $t \in [0,T]$ satisfies the following estimate:
	\begin{align}\label{eq:freq_local_err}
		\mathbb{E}\left[\left\|\Sigma(U(t))-\Sigma(\Pi_N U(t))\right\|^2_0\right] 
		\lesssim
		\begin{cases}
			N^{-4\gamma+}, & \text{for } d = 1, 2, \\
			N^{\min(-2\gamma, 1 - 4\gamma+)}, & \text{for } d = 3.
		\end{cases}
	\end{align}
\end{lemma}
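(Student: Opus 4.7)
Since $\Sigma(U) - \Sigma(\Pi_N U) = (0, \sigma(u) - \sigma(\Pi_N u))^\top$, the target reduces to controlling $\mathbb{E}\bigl[\|\sigma(u(t)) - \sigma(\Pi_N u(t))\|_{H^{-1}}^2\bigr]$. The plan is to derive two upper bounds on $\|\sigma(u) - \sigma(\Pi_N u)\|_{H^{-1}}$ and combine them by taking the minimum.

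The first, trivial bound comes from \eqref{sigma_Lipschitz}: $|\sigma(u)-\sigma(\Pi_N u)|\le C|\Pi_{>N}u|$ pointwise, hence
$\|\sigma(u)-\sigma(\Pi_N u)\|_{H^{-1}}\le\|\sigma(u)-\sigma(\Pi_N u)\|_{L^2}\lesssim\|\Pi_{>N}u\|_{L^2}\lesssim N^{-\gamma}\|u\|_{H^\gamma}$
by Bernstein's inequality (Lemma \ref{bound}). Squaring and using \eqref{Lp_bound} yields the upper bound $N^{-2\gamma}$, which already delivers the $N^{-2\gamma}$ branch of the minimum in $d=3$ (active for $\gamma\le 1/2$) and will be the weaker of the two bounds in $d=1,2$.

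The sharper bound comes from a second-order Taylor expansion
\[
\sigma(u)-\sigma(\Pi_N u)=\sigma'(\Pi_N u)\,\Pi_{>N}u+\tfrac12\sigma''(\xi)(\Pi_{>N}u)^2=:T_1+T_2,
\]
with $\xi$ between $\Pi_N u$ and $u$. The linear term $T_1$ is handled by duality: for $\|\phi\|_{H^1}=1$, self-adjointness of $\Pi_{>N}$ gives
$\int \sigma'(\Pi_N u)\,\Pi_{>N}u\,\phi\,\d x=\int u\cdot\Pi_{>N}\bigl(\sigma'(\Pi_N u)\phi\bigr)\d x$.
Applying the tail estimate $\|\Pi_{>N} f\|_{H^{-\gamma}}\lesssim N^{-\gamma-s}\|f\|_{H^s}$ (a direct consequence of Lemma \ref{bound}) with a suitable $s$, together with the chain-rule bound $\|\sigma'(\Pi_N u)\|_{H^s}\lesssim 1+N^{s-\gamma}\|u\|_{H^\gamma}$ (obtained by differentiating the composition and exploiting that $\Pi_N u$ is band-limited at level $N$) and a Sobolev multiplication inequality appropriate to the dimension ($H^1$-algebra in $d=1$; an $H^{1+}$ bound in $d=2$; the $H^{3/2+}$ multiplication bound of Lemma \ref{lemma_neg_norm_3d} in $d=3$), produces $\|T_1\|_{H^{-1}}\lesssim N^{-2\gamma+}$ in $d=1,2$ and $\|T_1\|_{H^{-1}}\lesssim N^{1/2-2\gamma+}$ in $d=3$. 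The quadratic term $T_2$ is estimated via the dual embedding $L^{p'}\hookrightarrow H^{-1}$: $\|T_2\|_{H^{-1}}\lesssim\|(\Pi_{>N}u)^2\|_{L^{p'}}=\|\Pi_{>N}u\|_{L^{2p'}}^2$, with $p'$ close to $1$ in $d=1,2$ and $p'=6/5$ in $d=3$; Sobolev embedding combined with Bernstein for $\Pi_{>N}u$ then yields the same $N$-scaling as $T_1$.

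Squaring the Taylor-based bound and applying \eqref{Lp_bound} gives $\mathbb{E}[\|\cdot\|_{H^{-1}}^2]\lesssim N^{-4\gamma+}$ in $d=1,2$ and $\lesssim N^{1-4\gamma+}$ in $d=3$. Taking the minimum with the trivial $N^{-2\gamma}$ bound produces the stated rates. The main technical obstacle lies in the linear term $T_1$: for small $\gamma$, the composition $\sigma'(\Pi_N u)$ admits no useful generic bound in fractional Sobolev spaces, so the band-limitedness of $\Pi_N u$ must be exploited directly via Bernstein's inequality, in the same dyadic spirit as the proof of Lemma \ref{lem:H_estimate_3d}; the three-dimensional case is the most delicate since $H^{3/2+}$ (rather than $H^{1+}$) control of $\sigma'(\Pi_N u)$ is required in the multiplication step.
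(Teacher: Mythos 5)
Your plan is sound and reaches the correct exponents, but it takes a genuinely different route from the paper. For $d=1,2$ the paper does not argue at all: it simply cites Lemma~6.1 of \cite{CLLY2024}, so your sketch is more self-contained there. For $d=3$ the paper does not Taylor-expand at $\Pi_N u$. Instead it telescopes the outer difference dyadically \emph{upward} in frequency, writing $\Sigma(U)-\Sigma(\Pi_N U)=\mathcal{G}_0+\sum_{j=0}^{M-1}\big(\Sigma(\Pi_{2^{j+1}N}U)-\Sigma(\Pi_{2^jN}U)\big)$ with $2^MN\approx N^2$, applies the mean-value theorem to each increment, and estimates each piece with the product bound \eqref{dualargu2_3d}, obtaining $\|G_j\|_0\lesssim (2^jN)^{-1-\gamma}\big((2^jN)^{3/2-\gamma+}+1\big)$ and summing (the tail $\mathcal{G}_0$ gives $N^{-4\gamma}$, and the sum converges for $\gamma>1/4$). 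Your trivial $N^{-2\gamma}$ branch coincides with the paper's first step, and your $T_2$ estimate via $L^{p'}\hookrightarrow H^{-1}$ plus Bernstein is clean and correct (note it also needs $\gamma>1/4$ in $d=3$ to control $\|\Pi_{>N}u\|_{L^{12/5}}$, which is harmless since for $\gamma\le 1/4$ the minimum is realized by the trivial branch). What your route buys is a conceptually simpler two-term decomposition; what the paper's route buys is that the argument of $\sigma'$ and the increment are simultaneously band-limited at the \emph{same} dyadic scale, which is exactly what makes the composition norms controllable.

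The one place you should be more careful is the asserted chain-rule bound $\|\sigma'(\Pi_N u)\|_{H^s}\lesssim 1+N^{s-\gamma}\|u\|_{H^\gamma}$. This is fine for $s\le 1$ (where $\nabla\sigma'(\Pi_Nu)=\sigma''(\Pi_Nu)\nabla\Pi_Nu$ and Bernstein applies directly), but for $s$ near $2$ the term $\sigma'''(\Pi_Nu)(\nabla\Pi_Nu)^2$ is generically of size $N^{2+d/2-2\gamma}\|u\|_{H^\gamma}^2$, which exceeds $N^{2-\gamma}$ whenever $\gamma<d/2$; so the bound as stated is false in the regime you need it, and it cannot be rescued by ``differentiating the composition.'' You do flag that a dyadic argument in the spirit of Lemma~\ref{lem:H_estimate_3d} is required, and indeed the paper faces the identical issue (its displayed estimate silently uses $\|\sigma'(\xi)\|_{H^{3/2+}}\lesssim(2^jN)^{3/2-\gamma+}\|u\|_{H^\gamma}$ for band-limited $\xi$, deferring the justification to \cite{CLLY2024}). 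So this is not a gap unique to your proof, but in a final write-up you should replace the naive chain-rule claim by the telescoped composition estimate, since that estimate, not the chain rule, is what actually closes the argument.
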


\begin{proof}
	The estimate \eqref{eq:freq_local_err} for $d = 1, 2$ follows directly from Lemma~6.1 in \cite{CLLY2024}. 
	
	For $d = 3$, since 
	\[
	\left\|\Sigma(U(t)) - \Sigma(\Pi_N U(t))\right\|_0 
	\leq \|\sigma(u(t)) - \sigma(\Pi_N u(t))\|_{L^2},
	\]
	we have
	\begin{align}\label{eq4eq_1}
		\mathbb{E}\left[\left\|\Sigma(U(t))-\Sigma(\Pi_N U(t))\right\|^2_0\right] 
		&\leq \mathbb{E}\left[\|\sigma(u(t)) - \sigma(\Pi_N u(t))\|^2_{L^2}\right] \notag \\
		&\lesssim \mathbb{E}\left[\|\Pi_{>N} u(t)\|^2_{L^2}\right] \notag \\
		&\leq N^{-2\gamma} \mathbb{E}\left[\sup_{0 \leq t \leq T} \|u(t)\|^2_{H^{\gamma}}\right].
	\end{align}
	
	Next, decompose $\Sigma(U(t)) - \Sigma(\Pi_N U(t))$ dyadically as follows (with $M = \lceil \log_2 N \rceil$):
	\begin{align*}
		\Sigma(U(t)) - \Sigma(\Pi_N U(t)) 
		&= \Sigma(U(t)) - \Sigma(\Pi_{2^M N} U(t)) 
		+ \sum_{j=0}^{M-1} \Big(\Sigma(\Pi_{2^{j+1}N} U(t)) - \Sigma(\Pi_{2^j N} U(t))\Big) \\
		&=: \mathcal{G}_0 + \sum_{j=0}^{M-1} G_j.
	\end{align*}
	
	By the mean-value theorem, we obtain
	\begin{align*}
		\|G_j\|_0 
		&\lesssim \|\Sigma(\Pi_{2^{j+1}N} U(t)) - \Sigma(\Pi_{2^j N} U(t))\|_0 \\
		&\lesssim \max_{\theta} \|\sigma^\prime(\xi) \cdot (\Pi_{2^{j+1}N} - \Pi_{2^j N}) u(t)\|_{H^{-1}},
	\end{align*}
	where $\xi = \theta \Pi_{2^{j+1}N} u(t) + (1 - \theta) \Pi_{2^j N} u(t)$. Using \eqref{dualargu2_3d}, we estimate
	\begin{align*}
		\|G_j\|_0 
		&\lesssim \|(\Pi_{2^{j+1}N} - \Pi_{2^j N}) u(t)\|_{H^{-1}} 
		\left(\|\sigma^\prime(\xi)\|_{H^{\frac{3}{2}+}} + \|\sigma^\prime\|_{L^\infty}\right) \\
		&\lesssim (2^j N)^{-1-\gamma} \|u(t)\|_{H^\gamma} 
		\left((2^j N)^{\frac{3}{2} - \gamma+} \|u(t)\|_{H^\gamma} + 1\right).
	\end{align*}
	
	For $\mathcal{G}_0$, we directly estimate:
	\[
	\|\mathcal{G}_0\|_0 \lesssim \|\Pi_{>N^2} u(t)\|_{L^2} \lesssim N^{-2\gamma} \|u(t)\|_{H^\gamma}.
	\]
	
	Summing these terms, we have:
	\begin{align}\label{eq4eq}
		\mathbb{E}\left[\left\|\Sigma(U(t)) - \Sigma(\Pi_N U(t))\right\|^2_0\right] 
		&\leq M \left(\mathbb{E}\left[\|\mathcal{G}_0\|_0^2\right] + \sum_{j=0}^{M-1} \mathbb{E}\left[\|G_j\|_0^2\right]\right) \notag \\
		&\lesssim M \left(N^{-4\gamma} + \sum_{j=0}^{M-1} (2^j N)^{1 - 4\gamma+}\right) \mathbb{E}\left[1 + \|u(t)\|^4_{H^\gamma}\right] \notag \\
		&\lesssim N^{1 - 4\gamma+} \mathbb{E}\left[1 + \sup_{0 \leq t \leq T} \|u(t)\|^4_{H^\gamma}\right],
	\end{align}
	when $\gamma > \frac{1}{4}$. Here, $M$ is absorbed into $N^{0+}$ in the last inequality. 
	
	Combining \eqref{eq4eq_1} and \eqref{eq4eq} completes the proof of \eqref{eq:freq_local_err} for $d = 3$.
\end{proof}

\section{Proof of the main results}\label{sec:1st_order}

In this section, we present the construction of the numerical scheme through analyzing the error in approximating the variation-of-constant formula \eqref{mild_form}. Then we prove the convergence of the numerical scheme. 

\subsection{Construction of the numerical scheme in (\ref{eq:filtered_1st_scheme})}\label{subsec:construction_1st_order}
Writing $\Sigma(U(t_{n}+s))$ in terms of Taylor expansion at $e^{s L}U(t_{n})$, we have
\begin{equation}\label{eq:taylor_sigma}
\begin{array}{l}
\Sigma\big(U(t_{n}+s)\big)=\Sigma\big(e^{s L}U(t_{n})\big)+\Sigma^{\prime}\big(e^{s L}U(t_{n})\big)\big(U(t_{n}+s)-e^{sL}U(t_{n})\big)\\[3mm]
\qquad\qquad\qquad\quad+\Big(R_{\Sigma}(s)\big(U(t_{n}+s)-e^{sL}U(t_{n})\big)\Big)\cdot \big(U(t_{n}+s)-e^{sL}U(t_{n})\big),
\end{array}
\end{equation}
where
\begin{equation}\label{eq:sigma_prime}
\Sigma^{\prime}(V)=
\begin{pmatrix}
0& 0\\
\sigma^{\prime}(V_{1})& 0
\end{pmatrix}
\quad \text{for}\quad
V=\begin{pmatrix}
V_1\\ V_2
\end{pmatrix},
\end{equation}
and
\begin{equation}\label{eq:r_sigma}
{\displaystyle R_{\Sigma}(s)=\int_{0}^{1}\int_{0}^{1}\theta \Sigma^{\prime\prime}\Big[(1-\xi)e^{sL}U(t_{n})+\xi(1-\theta)e^{sL}U(t_{n})+\theta U(t_{n}+s)\Big]\d \xi \d\theta,}
\end{equation}
here $\Sigma^{\prime\prime}(V)=\big(\Sigma^{\prime\prime}(V)_{ijk}\big)_{2\times 2\times 2}=\big(\partial_{V_{k}}\partial_{V_{j}}\Sigma_{i}(V))_{2\times 2\times 2}$ is a third-order tensor with $\Sigma^{\prime\prime}_{211}(V)=\sigma^{\prime\prime}(V_{1})$ and $\Sigma^{\prime\prime}_{ijk}(V)=0$ for $(i,j,k)\neq (2,1,1)$. Then, substituting \eqref{eq:taylor_sigma} into \eqref{mild_form}, we obtain
\begin{equation}\label{eq:3-1}
\begin{array}{l}
{\displaystyle U(t_{n}+\tau)=e^{\tau L}U(t_{n})+\int_{0}^{\tau}e^{(\tau-s)L}\Sigma\big(e^{sL}U(t_{n})\big)\d W(t_{n}+s)}\\[3mm]
{\displaystyle \qquad\qquad\quad\;\;+\int_{0}^{\tau}e^{(\tau-s)L}\Sigma^{\prime}\big(e^{sL}U(t_{n})\big)\big(U(t_{n}+s)-e^{sL}U(t_{n})\big)\d W(t_{n}+s)+R_{1}(t_{n})}\\[3mm]
{\displaystyle \qquad\qquad\;\; :=e^{\tau L}U(t_{n})+I_{1}(t_{n})+I_{2}(t_{n})+R_{1}(t_{n}),}
\end{array}
\end{equation}
where
\begin{equation}\label{eq:r_1}
R_{1}(t_{n})=\int_{0}^{\tau}e^{(\tau-s)L}\left[\Big(R_{\Sigma}(s)\big(U(t_{n}+s)-e^{sL}U(t_{n})\big)\Big)\cdot \big(U(t_{n}+s)-e^{sL}U(t_{n})\big)\right]\d W(t_{n}+s).
\end{equation}
The remainder $R_{1}(t_{n})$ is estimated in the following lemma.

\begin{lemma}\label{lem:r_1}
Under the conditions of Theorem \ref{thm:0_norm_error_rough_data}, the following estimates hold for $d=1,2,3$: 
\begin{equation}\label{eq:estimate_r1}
\mathbb{E}\left[\|R_{1}(t_{n})\|_{1}^{2}\right]\lesssim \tau^{3}.
\end{equation}
\end{lemma}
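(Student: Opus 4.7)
The plan is to apply Itô's isometry to $R_1(t_n)$, reducing the $L^2(\Omega)$ norm squared to a time integral of the $\|\cdot\|_1$-norm squared of the integrand, and then bound the integrand pointwise in $s$ using the algebraic structure of $R_\Sigma$ together with the fourth-moment bound \eqref{2nd_order_error} from Lemma \ref{lemma_1}.

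First I would compute, using Itô's isometry and \eqref{semigroup},
\begin{equation*}
\mathbb{E}\bigl[\|R_1(t_n)\|_1^2\bigr]
= \mathbb{E}\!\int_0^\tau \bigl\|e^{(\tau-s)L}\bigl[(R_\Sigma(s) V(s))\cdot V(s)\bigr]\bigr\|_1^2\,\d s
\lesssim \mathbb{E}\!\int_0^\tau \bigl\|(R_\Sigma(s) V(s))\cdot V(s)\bigr\|_1^2\,\d s,
\end{equation*}
where I write $V(s):=U(t_n+s)-e^{sL}U(t_n)$ with components $V(s)=(V_1(s),V_2(s))^\top$.

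Next I would exploit the sparse structure of $\Sigma^{\prime\prime}$: since the only nonzero entry of the tensor $\Sigma^{\prime\prime}(V)$ is $\Sigma^{\prime\prime}_{211}=\sigma^{\prime\prime}(V_1)$, the definition \eqref{eq:r_sigma} of $R_\Sigma(s)$ forces the vector $(R_\Sigma(s)V(s))\cdot V(s)$ to be of the form $(0,\,r(s)\,V_1(s)^2)^\top$, where $r(s)$ is a bounded function arising from $\sigma^{\prime\prime}$ evaluated at a convex combination of $e^{sL}U(t_n)$ and $U(t_n+s)$, and hence $|r(s)|\lesssim 1$ by \eqref{sigma_Lipschitz}. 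Consequently,
\begin{equation*}
\bigl\|(R_\Sigma(s) V(s))\cdot V(s)\bigr\|_1
= \bigl\|r(s)\,V_1(s)^2\bigr\|_{L^2}
\lesssim \|V_1(s)\|_{L^4}^{2}
\lesssim \|V_1(s)\|_{H^1}^{2}
\le \|V(s)\|_1^{2},
\end{equation*}
where I use the Sobolev embedding $H^1(\mathcal{O})\hookrightarrow L^4(\mathcal{O})$, which holds for $d=1,2,3$.

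Combining these two steps gives
\begin{equation*}
\mathbb{E}\bigl[\|R_1(t_n)\|_1^2\bigr]
\lesssim \int_0^\tau \mathbb{E}\bigl[\|V(s)\|_1^{4}\bigr]\,\d s
\lesssim \int_0^\tau \tau^{2}\,\d s
= \tau^{3},
\end{equation*}
where the second inequality is exactly the fourth-moment estimate \eqref{2nd_order_error} of Lemma \ref{lemma_1}. The main obstacle, and really the only subtle point, is recognizing that the seemingly quadratic term $(R_\Sigma V)\cdot V$ collapses to the single scalar $\sigma^{\prime\prime}(\xi)V_1^2$, so that no derivative of $V$ is needed and $L^4\hookleftarrow H^1$ (rather than a stronger embedding) suffices uniformly in $d\le 3$; everything else is a direct application of Itô's isometry, the semigroup bound \eqref{semigroup}, and Lemma \ref{lemma_1}.
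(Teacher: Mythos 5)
Your proposal is correct and follows essentially the same route as the paper's proof: Itô's isometry plus the semigroup bound, the observation that the only nonzero entry of $\Sigma''$ collapses the quadratic remainder to $(0,\sigma''(\xi)V_1^2)^\top$ so that $\|\cdot\|_1$ reduces to an $L^2$ norm of $V_1^2$, the embedding $H^1\hookrightarrow L^4$ valid for $d\le 3$, and finally the fourth-moment estimate \eqref{2nd_order_error}. No gaps.
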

\begin{proof}

From the expression of $\Sigma^{\prime\prime}(V)$ in \eqref{eq:sigma_prime} for each $V=(1-\xi)e^{sL}U(t_{n})+\xi(1-\theta)e^{sL}U(t_{n})+\theta U(t_{n}+s)$, we note that
\begin{equation}
\begin{array}{l}
\Big(\Sigma^{\prime\prime}(V)\big(U(t_{n}+s)-e^{sL}U(t_{n})\big)\Big)\cdot \big(U(t_{n}+s)-e^{sL}U(t_{n})\big)\\[3mm]
\,\, =\begin{pmatrix}
0\\
\sigma''(V_{1})\big(U(t_{n}+s)-e^{sL}U(t_{n})\big)^{2}_{1}
\end{pmatrix},
\end{array}
\end{equation}
where $\big(U(t_{n}+s)-e^{sL}U(t_{n})\big)_{1}$ denotes the first component of $U(t_{n}+s)-e^{sL}U(t_{n})$. Then recalling the condition \eqref{sigma_Lipschitz}, we have
\begin{equation}
\begin{array}{l}
\Big\|\Big(R_{\Sigma}(s)\big(U(t_{n}+s)-e^{sL}U(t_{n})\big)\Big)\cdot \big(U(t_{n}+s)-e^{sL}U(t_{n})\big)\Big\|_{1}\\[2mm]
\quad \lesssim \big\|\sigma''(V_{1}) \big(U(t_{n}+s)-e^{sL}U(t_{n})\big)^{2}_{1}\big\|_{L^{2}}
\lesssim \big\|\big(U(t_{n}+s)-e^{sL}U(t_{n})\big)_{1}\big\|^2_{L^{4}}.
\end{array}
\end{equation}
Since, according to the Sobolev embedding $H^{1}(\mathcal{O})\hookrightarrow L^{4}(\mathcal{O})$ for $d=1,2,3$, 
\begin{equation}
\big\|\big(U(t_{n}+s)-e^{sL}U(t_{n})\big)_{1}\big\|^{2}_{L^{4}}\lesssim \big\|U(t_{n}+s)-e^{sL}U(t_{n})\big\|_{1}^{2},
\end{equation}
it follows that 
\begin{equation}\label{eq:3-2}
\Big\|\Big(R_{\Sigma}(s)\big(U(t_{n}+s)-e^{sL}U(t_{n})\big)\Big)\cdot \big(U(t_{n}+s)-e^{sL}U(t_{n})\big)\Big\|_{1}\lesssim \big\|U(t_{n}+s)-e^{sL}U(t_{n})\big\|_{1}^{2}.
\end{equation}
Thus, by using the It\^{o}'s isometry and estimate \eqref{2nd_order_error} in Lemma~\ref{lemma_1}, we have
\begin{equation}\label{eq:3-2-R1}
\begin{array}{l}
{\displaystyle \mathbb{E}\left[\|R_{1}\|_{1}^{2}\right]\leq\mathbb{E}\left[\int_{0}^{\tau}\Big\|\Big(R_{\Sigma}(s)\big(U(t_{n}+s)-e^{sL}U(t_{n})\big)\Big)\cdot \big(U(t_{n}+s)-e^{sL}U(t_{n})\big)\Big\|^{2}_{1}\d s\right]}\\[4mm]
{\displaystyle \qquad\qquad\;\;\lesssim \tau \sup_{0\leq s\leq \tau} \mathbb{E}\left[\big\|U(t_{n}+s)-e^{sL}U(t_{n})\big\|_{1}^{4}\right]\lesssim \tau^3.}
\end{array}
\end{equation} 
This completes the proof of Lemma \ref{lem:r_1}.
\end{proof}

Next, we show that $I_{2}(t_{n})$ is also a small term that can be dropped in the design of the numerical scheme. 
By substituting \eqref{mild_form} into the expression of $I_{2}(t_{n})$ in \eqref{eq:3-1}, we have
\begin{align}\label{eq:rewrite_I2}
	I_{2}(t_{n})=&\int_{0}^{\tau}e^{(\tau-s)L}\Big[\Sigma^{\prime}\big(e^{sL}U(t_{n})\big)\big(U(t_{n}+s)-e^{sL}U(t_{n})\big)\Big]\d W(t_{n}+s)\notag\\
	=&\int_{0}^{\tau}e^{(\tau-s)L}\left[\Sigma^{\prime}\big(e^{sL}U(t_{n})\big)\int_{0}^{s}e^{(s-\delta)L}\Sigma\big(U(t_{n}+\delta)\big)\d W(t_{n}+\delta)\right]\d W(t_{n}+s)\notag\\
	=&\int_{0}^{\tau}e^{(\tau-s)L}\left[\Sigma^{\prime}\big(e^{sL}U(t_{n})\big)\int_{0}^{s}\Big(e^{(s-\delta)L}-1\Big)\Sigma\big(U(t_{n}+\delta)\big)\d W(t_{n}+\delta)\right]\d W(t_{n}+s),
\end{align}
where we have used the property $\Sigma^{\prime}\big(e^{sL}U(t_{n})\big)\Sigma\big(U(t_{n}+\delta)\big)\equiv 0$ in the last equality of \eqref{eq:rewrite_I2}. Based on the expression in \eqref{eq:rewrite_I2}, we estimate $I_2(t_n)$ in the following lemma.

\begin{lemma}\label{lem:1_norm_I2}
Under the conditions of Theorem \ref{thm:0_norm_error_rough_data}, the following estimates hold for $d=1,2,3$: 
	\begin{equation}\label{eq:1_norm_estimate_I2}
		\mathbb{E}\left[\|I_2(t_{n})\|_{1}^{2}\right]\lesssim \tau^{4}.
	\end{equation}
\end{lemma}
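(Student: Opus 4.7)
The plan is to apply It\^o's isometry to each of the two stochastic integrals in the representation \eqref{eq:rewrite_I2} and to exploit two structural features in order to upgrade the naive $O(\tau^3)$ bound to $O(\tau^4)$. The first feature, already used in deriving \eqref{eq:rewrite_I2}, is the algebraic cancellation $\Sigma'(V)\Sigma(W)\equiv 0$, which replaces $\Sigma(U(t_n+\delta))$ in the inner integral by $(e^{(s-\delta)L}-1)\Sigma(U(t_n+\delta))$. The second, which is the core of the argument, is that for any scalar $w$ the first component of $e^{tL}(0,w)^\top$ equals $\frac{\sin(t\sqrt{-\Delta})}{\sqrt{-\Delta}}w$, whose $L^2$-norm is bounded by $t\,\|w\|_{L^2}$ since $|\sin(x)/x|\le 1$.

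Carrying this out, I would first apply It\^o's isometry to the outer integral, use the boundedness \eqref{semigroup} with $r=1$ to absorb the prefactor $e^{(\tau-s)L}$, and reduce to
\begin{equation*}
\mathbb{E}\|I_2(t_n)\|_1^2\lesssim \int_0^{\tau}\mathbb{E}\big\|\Sigma'\big(e^{sL}U(t_n)\big)J(s)\big\|_1^2\,ds,
\end{equation*}
where $J(s):=\int_0^s(e^{(s-\delta)L}-1)\Sigma(U(t_n+\delta))\,dW(t_n+\delta)$. By the explicit form of $\Sigma'$ in \eqref{eq:sigma_prime}, only the second entry of $\Sigma'(e^{sL}U(t_n))J(s)$ is nonzero and equals $\sigma'\!\big((e^{sL}U(t_n))_1\big)\,J(s)_1$, so the $\|\cdot\|_1$-norm collapses to the $L^2$-norm of this scalar, and \eqref{sigma_Lipschitz} reduces the task to bounding $\mathbb{E}\|J(s)_1\|_{L^2}^2$. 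Applying It\^o's isometry a second time to the scalar stochastic integral $J(s)_1$, and using $\Sigma(U)=(0,\sigma(u))^\top$ together with the sinc bound above, I would obtain
\begin{equation*}
\mathbb{E}\|J(s)_1\|_{L^2}^2\lesssim \int_0^s (s-\delta)^2\,\mathbb{E}\|\sigma(u(t_n+\delta))\|_{L^2}^2\,d\delta\lesssim s^3,
\end{equation*}
where the last step uses \eqref{sigma_Lipschitz} together with the a priori bound \eqref{Lp_bound}. Integrating over $s\in[0,\tau]$ then gives $\mathbb{E}\|I_2(t_n)\|_1^2\lesssim\tau^4$, which is \eqref{eq:1_norm_estimate_I2}.

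The delicate point is the matching between the two structural features: the cancellation $\Sigma'\Sigma\equiv 0$ alone, combined with standard stochastic estimates, would only yield $\tau^3$; the additional factor of $(s-\delta)$ needed to reach $\tau^4$ must come from the \emph{first} component of the wave semigroup acting on $(0,\sigma(u))^\top$, and this is precisely the component selected by $\Sigma'$ because $\Sigma'(V)$ has its only nonzero entry in the $(2,1)$-position. Once this matching is identified, the remainder is a routine double use of It\^o's isometry that does not degrade as $\gamma\to 0$, since only an $L^2$-bound on $\sigma(u)$—which holds under \eqref{sigma_Lipschitz} and \eqref{Lp_bound} for any $\gamma\ge 0$—is required.
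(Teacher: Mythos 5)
Your proposal is correct and follows essentially the same route as the paper: a double application of It\^o's isometry to the representation \eqref{eq:rewrite_I2}, with the crucial factor $(s-\delta)^2$ extracted from the action of $e^{(s-\delta)L}-1$ on $(0,\sigma(u))^\top$ in precisely the component that $\Sigma^{\prime}$ selects, and a final bound requiring only $\|\sigma(u)\|_{L^2}\lesssim 1+\|u\|_{L^2}$. The paper phrases this gain as $e^{(s-\delta)L}-1=O((s-\delta)L)$ combined with the identity $\Sigma^{\prime}(\cdot)L\Sigma(U)=(0,\sigma^{\prime}(\tilde u)\sigma(u))^\top$, which is the same mechanism as your bound $\|\sin(t\sqrt{-\Delta})(-\Delta)^{-1/2}w\|_{L^2}\le t\|w\|_{L^2}$ for the first component of the wave propagator.
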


\begin{proof}
By using relation \eqref{eq:rewrite_I2} and It\^{o}'s isometry, we have
	\begin{align*}
		\mathbb{E}\left[\|I_2(t_n)\|^2_1\right]\lesssim & \int_{0}^{\tau}\int_{0}^{s}\mathbb{E}\big[\big\|\Sigma^{\prime}\big(e^{sL}U(t_{n})\big)\big(e^{(s-\delta)L}-1\big)\Sigma\big(U(t_{n}+\delta)\big)\big\|_{1}^{2}\big]\d\delta \d s\\
		\lesssim &\int_{0}^{\tau}\int_{0}^{s}(s-\delta)^2 \sup_{s,\delta}\left(\mathbb{E}\left[\left\|\Sigma^{\prime}\big(e^{sL}U(t_{n})\big)L\Sigma\big(U(t_{n}+\delta)\big)\right\|_{1}^{2}\right]\right)\d\delta \d s.
	\end{align*} 
We note that
	\begin{align*}
		\Sigma^{\prime}\big(e^{sL}U(t_{n})\big)L\Sigma\big(U(t_{n}+\delta)\big)&=\begin{pmatrix}
			0 & 0\\
			\sigma^{\prime}\big(\tilde{u}(t_{n}+s)\big) & 0
		\end{pmatrix}
		\begin{pmatrix}
			0 & 1\\
			\Delta & 0
		\end{pmatrix}
		\begin{pmatrix}
			0\\
			\sigma\big(u(t_n+\delta)\big)
		\end{pmatrix}\\
		&=\begin{pmatrix}
			0\\
			\sigma^{\prime}\big(\tilde{u}(t_{n}+s)\big)\sigma\big(u(t_{n}+\delta)\big)
		\end{pmatrix},
	\end{align*}
	Thus $I_2$ can be estimated by
	\begin{align*}
		\mathbb{E}\left[\|I_2(t_n)\|^2_1\right]\lesssim & \int_{0}^{\tau}\int_{0}^{s}(s-\delta)^2 \mathbb{E}\left[\sup_{s,\delta}\left\|\sigma^{\prime}\big(\tilde{u}(t_{n}+s)\big)\sigma\big(u(t_{n}+\delta)\big)\right\|_{L^2}^{2}\right]\d\delta \d s\\[2mm]
		\lesssim & \tau^4 \sup_{\delta}\left(\mathbb{E}\left[1+\left\|U(t_{n}+\delta)\right\|_{0}^{2}\right]\right).
	\end{align*}
This proves the result of Lemma \ref{lem:1_norm_I2}. 
\end{proof}


In view of relation \eqref{eq:3-1} and Lemmas \ref{lem:r_1}--\ref{lem:1_norm_I2}, it remains to find a good approximation of $I_{1}(t_{n})$. To ensure the stability of numerical approximations, we introduce the frequency localization operator $\Pi_{\tau^{-1}}$ to decompose $I_{1}(t_{n})$ as follows: 
\begin{align}\label{eq:i_1}
I_1(t_n)&=\int_{0}^{\tau}e^{\tau L}\left(e^{-sL}\Sigma\big(e^{sL}\Pi_{\tau^{-1}} U(t_{n})\big)\right)\d W(t_{n}+s)+R_{21}(t_n)\notag\\
&=\int_{0}^{\tau}e^{\tau L}\Sigma(\Pi_{\tau^{-1}} U(t_n))\d W(t_{n}+s)+R_{21}(t_n)+R_{22}(t_n)\notag\\[2mm]
&=\Pi_{\tau^{-1}}\Sigma(\Pi_{\tau^{-1}}U(t_n))\Delta_n W +R_{21}(t_n)+R_{22}(t_n)+R_{23}(t_n),
\end{align} 
where 
\begin{equation}\label{delta_w}
\Delta_{n}W:=W(t_{n}+\tau)-W(t_{n}),
\end{equation}
and
\begin{align}
R_{21}&=\int_{0}^{\tau}e^{(\tau-s) L}\left[\Sigma\big(e^{sL}U(t_{n})\big)-\Sigma\big(e^{sL}\Pi_{\tau^{-1}} U(t_{n})\big)\right]\d W(t_{n}+s)\label{eq:def_tilde_R_21}\\
R_{22}&=\int_{0}^{\tau}e^{\tau L}\left[e^{-sL}\Sigma\big(e^{sL}\Pi_{\tau^{-1}} U(t_{n})\big)-\Sigma(\Pi_{\tau^{-1}} U(t_n))\right]\d W(t_{n}+s)\label{eq:def_tilde_R_22}\\[2mm]
R_{23}&=\left[\Sigma(\Pi_{\tau^{-1}} U(t_n))-\Pi_{\tau^{-1}}\Sigma(\Pi_{\tau^{-1}}U(t_n))\right]\Delta_n W.\label{eq:def_tilde_R_23}
\end{align}

The estimate of $R_2(t_n) = R_{21}(t_n)+R_{22}(t_n)+R_{23}(t_n)$ is presented in the following lemma. 
\begin{lemma}\label{lem:r_2}
Under the conditions of Theorem \ref{thm:0_norm_error_rough_data}, the following estimates hold for $d=1,2,3$: 
\begin{align}\label{eq:0_norm_R2}
\mathbb{E}\left[\|R_2(t_n)\|_0^2\right]\lesssim 
\left\{
\begin{aligned} 
&\displaystyle \tau^{1+4\gamma-} &&\text{for}\quad \gamma\in \Big(0,\frac{1}{2}\Big] && \text{and}\quad d=1,2,\\ &\displaystyle \tau^{1+2\gamma} &&\text{for}\quad \gamma\in \Big(0,\frac{1}{2}\Big] && \text{and}\quad d=3,\\ 
&\displaystyle \tau^{4\gamma-} &&\text{for}\quad \gamma\in \Big(\frac{1}{2},\frac{3}{4}\Big] && \text{and}\quad d=3.
\end{aligned} 
\right.
\end{align}
\end{lemma}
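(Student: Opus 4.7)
My plan is to bound each of $R_{21}$, $R_{22}$, $R_{23}$ separately and then combine; the three are handled by three essentially different mechanisms, and summing them will automatically reproduce the three regime-dependent rates. In each case, It\^o's isometry (for $R_{21}$ and $R_{22}$) or the independence of $\Delta_n W$ from $\mathcal{F}_{t_n}$ with $\mathbb{E}|\Delta_n W|^2=\tau$ (for $R_{23}$) reduces matters to an $\|\cdot\|_0$-norm estimate of the deterministic integrand, and the semigroup bound \eqref{semigroup} allows us to discard the outer $e^{(\tau-s)L}$ or $e^{\tau L}$ factor. A basic observation used throughout is that $\Pi_{\tau^{-1}}$ commutes with $e^{sL}$, so $\tilde U(s):=e^{sL}\Pi_{\tau^{-1}}U(t_n)=\Pi_{\tau^{-1}}e^{sL}U(t_n)$ is frequency-localised below $\tau^{-1}$.

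For $R_{21}$, after It\^o's isometry the integrand reduces to $\|\Sigma(W(s))-\Sigma(\Pi_{\tau^{-1}}W(s))\|_0^2$ with $W(s):=e^{sL}U(t_n)$, whose $H^\gamma$-regularity is controlled by $\|U(t_n)\|_\gamma$. Lemma~\ref{lem:freq_local_err} with $N=\tau^{-1}$ then produces all three target rates directly: $\tau^{1+4\gamma-}$ in $d=1,2$, $\tau^{1+2\gamma}$ in $d=3$ with $\gamma\leq 1/2$ (since the $N^{-2\gamma}$ branch is active there), and $\tau^{4\gamma-}$ in $d=3$ with $\gamma\in(1/2,3/4]$ (the $N^{1-4\gamma+}$ branch). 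For $R_{23}$, I would apply Bernstein's inequality (Lemma~\ref{bound}) to gain one full derivative, $\|\Pi_{>\tau^{-1}}\sigma(\Pi_{\tau^{-1}}u(t_n))\|_{H^{-1}}\lesssim \tau\|\sigma(\Pi_{\tau^{-1}}u(t_n))\|_{L^2}\lesssim\tau(1+\|u(t_n)\|_{L^2})$, giving $\mathbb{E}\|R_{23}\|_0^2\lesssim \tau^3$, which is subdominant in every regime.

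The main obstacle is $R_{22}$. Here I would invoke Lemma~\ref{lem:cancellation_structure} to write
\[
e^{-sL}\Sigma(e^{sL}V)-\Sigma(V)=\int_0^s e^{-rL}G(\tilde U(r))\,dr,\qquad G(\tilde U(r))=\begin{pmatrix}-\sigma(\tilde u(r))\\ \sigma'(\tilde u(r))\tilde v(r)\end{pmatrix},
\]
with $V=\Pi_{\tau^{-1}}U(t_n)$. Combining It\^o's isometry with Bochner's inequality and Cauchy-Schwarz yields $\mathbb{E}\|R_{22}\|_0^2\lesssim \tau^3 \sup_r \mathbb{E}\|G(\tilde U(r))\|_0^2$. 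The first component of $G$ contributes only $O(1)$ since $\|\sigma(\tilde u)\|_{L^2}\lesssim 1+\|\tilde u\|_{L^2}\lesssim 1+\|U(t_n)\|_\gamma$, so the work is concentrated on $\|\sigma'(\tilde u)\tilde v\|_{H^{-1}}$. This is where the case distinction enters: in $d=1,2$ with $\gamma\leq 1/2$ I would use Lemma~\ref{lem:H_estimate} with $N=\tau^{-1}$ to get $\lesssim \tau^{-(1-2\gamma+)}$, and in $d=3$ with $\gamma\in(1/2,3/4]$ I would use Lemma~\ref{lem:H_estimate_3d} to get $\lesssim \tau^{-(3/2-2\gamma+)}$, producing the required $\tau^{1+4\gamma-}$ and $\tau^{4\gamma-}$ respectively after multiplication by $\tau^3$.

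The subtle case is $d=3$ with $\gamma\in(0,1/2]$, where Lemma~\ref{lem:H_estimate_3d} gives only $\tau^{-(3/2-2\gamma+)}$ and hence the suboptimal $\tau^{4\gamma-}$, falling short of the required $\tau^{1+2\gamma}$ for small $\gamma$. To fix this I would abandon Lemma~\ref{lem:H_estimate_3d} in this regime and instead use the crude embedding $\|\sigma'(\tilde u)\tilde v\|_{H^{-1}}\leq \|\sigma'(\tilde u)\tilde v\|_{L^2}\lesssim \|\tilde v\|_{L^2}$, followed by Bernstein's inequality applied to the frequency-localised $\tilde v$: $\|\tilde v(r)\|_{L^2}\lesssim \tau^{-(1-\gamma)}\|U(t_n)\|_\gamma$. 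This gives $\mathbb{E}\|\sigma'(\tilde u)\tilde v\|_{H^{-1}}^2\lesssim \tau^{-(2-2\gamma)}$, and hence $\mathbb{E}\|R_{22}\|_0^2\lesssim \tau^{1+2\gamma}$, matching the required rate. Recognising that the ``naive'' $L^2$-Bernstein bound outperforms the more refined negative-norm product estimate in this low-regularity three-dimensional window is the key point of the argument; once this is in place, summing the three contributions for $R_{21}+R_{22}+R_{23}$ in each regime completes the proof.
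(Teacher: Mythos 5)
Your proposal is correct and follows essentially the same route as the paper: It\^o's isometry plus Lemma \ref{lem:freq_local_err} for $R_{21}$, Bernstein for the subdominant $R_{23}$, and the cancellation identity of Lemma \ref{lem:cancellation_structure} combined with the negative-norm product estimates for $R_{22}$, including the key observation that in $d=3$ with $\gamma\in(0,\tfrac12]$ the crude $L^2$--Bernstein bound $\tau^{\gamma-1}\|U(t_n)\|_\gamma$ outperforms Lemma \ref{lem:H_estimate_3d} and yields $\tau^{1+2\gamma}$, exactly as in \eqref{3d-2} and \eqref{eq:R_22_3d}. The only cosmetic difference is your $O(\tau^3)$ bound for $R_{23}$ versus the paper's $O(\tau^{3+2\gamma})$; both are subdominant in every regime.
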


\begin{proof}
We first consider the case with $d=1,2$. 
For the simplicity of notation, we define $(\tilde{u}(t_n+s),\tilde{v}(t_n+s))^\top : = e^{s L}(u(t_n),v(t_n))^\top$. 
Then, by substituting $U=\Pi_{\tau^{-1}} U(t_n)$ into \eqref{eq:dt_G} we obtain
\begin{align}
&\Big\|\frac{\d}{\d s}e^{-sL}\Sigma\big(e^{sL}\Pi_{\tau^{-1}} U(t_{n})\big)\Big\|_{0}
=\left\|\begin{pmatrix}
-\sigma(\Pi_{\tau^{-1}} \tilde{u}(t_{n}+s))\\ \sigma^{\prime} \big(\Pi_{\tau^{-1}} \tilde{u}(t_{n}+s)\big)\Pi_{\tau^{-1}} \tilde{v}(t_{n}+s)
\end{pmatrix}\right\|_{0}\notag\\[2mm]
&=\|\sigma(\Pi_{\tau^{-1}} \tilde{u}(t_{n}+s))\|_{L^2}+\left\|\sigma^{\prime} \big(\Pi_{\tau^{-1}} \tilde{u}(t_{n}+s)\big)\Pi_{\tau^{-1}} \tilde{v}(t_{n}+s)\right\|_{H^{-1}}.
\end{align}
Then, using the negative norm estimates in Lemma~\ref{lem:H_estimate}, we obtain the following estimate: 
\begin{align}
\Big\|\frac{\d}{\d s}e^{-sL}\Sigma\big(e^{sL}\Pi_{\tau^{-1}} U(t_{n})\big)\Big\|_{0}\lesssim 
\tau^{2\gamma-1-}\cdot\|U(t_n)\|_{\gamma}^2\quad \quad\text{for}\quad 0<\gamma\leq \frac{1}{2}.
\end{align}
This implies that, using It\^{o}'s isometry, 
\begin{align}\label{eq:R_22}
\mathbb{E}\left[\|R_{22}(t_n)\|_{0}^{2}\right]&\leq \mathbb{E}\left[\int_{0}^{\tau}\left(\int_{0}^{s}\Big\|\frac{\d}{\d \delta}e^{-\delta L}\Sigma\big(e^{\delta L}\Pi_{\tau^{-1}} U(t_{n})\big)\Big\|_{0}\d\delta\right)^2 \d s\right]\notag\\
&\lesssim \tau^{1+4\gamma-} \cdot\mathbb{E}\left[\|U(t_n)\|_{\gamma}^4\right]\quad\quad \text{for}\quad 0<\gamma\leq \frac{1}{2},
\end{align}
Similarly, $R_{21}(t_n)$ and $R_{23}(t_n)$ can be estimated by using It\^{o}'s isometry, negative-norm estimate for the frequency localization in Lemma \ref{lem:freq_local_err}, and Bernstein's inequality in Lemma~\ref{bound}, i.e., 
\begin{align}\label{eq:R_21_R_23}
\mathbb{E}\left[\|R_{21}(t_n)\|_{0}^{2}\right]+\mathbb{E}\left[\|R_{23}(t_n)\|_{0}^{2}\right]\lesssim & \int_{0}^{\tau}\mathbb{E}\Big[\big\|\Sigma\big(e^{sL}U(t_{n})\big)-\Sigma\big(e^{sL}\Pi_{\tau^{-1}} U(t_{n})\big)\big\|_{0}^{2}\Big]\d s\notag\\
&+\mathbb{E}\Big[\big\|\Sigma(\Pi_{\tau^{-1}} U(t_n))-\Pi_{\tau^{-1}}\Sigma(\Pi_{\tau^{-1}}U(t_n))\big\|_{0}^2\Big]\cdot \tau\notag\\
\lesssim&\tau^{1+4\gamma-}\cdot\mathbb{E}[\|e^{sL}U(t_n)\|_{\gamma}^4]+\tau^{5}\cdot\mathbb{E}[\|\sigma(\Pi_{\tau^{-1}} u(t_n))\|_{H^{1}}^2]\notag\\
\lesssim& \tau^{1+4\gamma-}\cdot\mathbb{E}[\|U(t_n)\|_{\gamma}^4]+\tau^{3+2\gamma}\cdot\mathbb{E}[1+\|U(t_n)\|_{\gamma}^2] . 
\end{align}
In the case $\gamma\in(0, \frac{1}{2}]$, we obtain the first result of \eqref{eq:0_norm_R2} from \eqref{eq:R_22}--\eqref{eq:R_21_R_23} and \eqref{Lp_bound}.

In the case $d=3$, the negative-norm estimate in Lemma \ref{lem:H_estimate_3d} implies that 
\begin{align}\label{3d-1}
\left\|\sigma^{\prime} \big(\Pi_{\tau^{-1}} \tilde{u}(t_{n}+s)\big)\Pi_{\tau^{-1}} \tilde{v}(t_{n}+s)\right\|_{H^{-1}}\lesssim 
\tau^{2\gamma-\frac{3}{2}-}\cdot\|U(t_n)\|_{\gamma}^2\quad \quad\text{for}\quad 0<\gamma\leq \frac{3}{4},
\end{align}
and Bernstein's inequality implies that  
\begin{align}\label{3d-2}
\left\|\sigma^{\prime} \big(\Pi_{\tau^{-1}} \tilde{u}(t_{n}+s)\big)\Pi_{\tau^{-1}} \tilde{v}(t_{n}+s)\right\|_{H^{-1}}&\lesssim \left\|\sigma^{\prime} \big(\Pi_{\tau^{-1}} \tilde{u}(t_{n}+s)\big)\Pi_{\tau^{-1}} \tilde{v}(t_{n}+s)\right\|_{L^{2}}\notag\\
&\lesssim \tau^{\gamma-1}\|U(t_n)\|_{\gamma},\quad \quad\text{for}\quad \gamma>0 . 
\end{align}
Therefore, the remainder $R_{22}(t_n)$ can be estimated as follows (using It\^{o}'s isometry):
\begin{align}\label{eq:R_22_3d}
\mathbb{E}\left[\|R_{22}(t_n)\|_{0}^{2}\right]&\leq \mathbb{E}\left[\int_{0}^{\tau}\Big(\int_{0}^{s}\Big\|\frac{\d}{\d \delta}e^{-\delta L}\Sigma\big(e^{\delta L}\Pi_{\tau^{-1}} U(t_{n})\big)\Big\|_{0}\d\delta\Big)^2 \d s\right]\notag\\
&\lesssim \left\{\begin{aligned} 
&\tau^{1+2\gamma} \,\mathbb{E}\big[\|U(t_n)\|_{\gamma}^2\big] && \text{for} && \gamma \in \Big(0, \frac{1}{2} \Big] &&\mbox{(here \eqref{3d-2} is used)} , \\ 
&\tau^{4\gamma-} \,\mathbb{E}\big[\|U(t_n)\|_{\gamma}^4\big] && \text{for} && \gamma \in \Big(\frac12, \frac{3}{4} \Big] &&\mbox{(here \eqref{3d-1} is used)} .
\end{aligned}
\right.
\end{align} 
Similarly, for $R_{21}(t_n)$ and $R_{23}(t_n)$, it follows from the discussions in \eqref{eq:R_21_R_23} and Lemma~\ref{lem:freq_local_err} that 
\begin{align}\label{eq:R_21_R_23_3d}
&\mathbb{E}\left[\|R_{21}(t_n)\|_{0}^{2}\right]+\mathbb{E}\left[\|R_{23}(t_n)\|_{0}^{2}\right]\\
&\lesssim \tau^{3+2\gamma} \,\mathbb{E}\big[1+\big\|U(t_n)\big\|_{\gamma}^2\big]+ \left\{\begin{aligned}
&\tau^{1+2\gamma} \,\mathbb{E}\big[\|U(t_n)\|_{\gamma}^2\big] && \text{for} && \gamma \in \Big(0, \frac{1}{2} \Big],\\
&\tau^{4\gamma-} \,\mathbb{E}\big[\|U(t_n)\|_{\gamma}^4\big] && \text{for} && \gamma \in \Big(\frac12, \frac{3}{4} \Big] . 
\end{aligned}
\right.
\end{align}

Finally, combing the estimates \eqref{eq:R_22_3d} and \eqref{eq:R_21_R_23_3d}, we obtain the 3D results in Lemma \ref{lem:r_2}.
\end{proof}

Now, substituting \eqref{eq:i_1} into \eqref{eq:3-1}, we obtain
\begin{equation}\label{eq:final_U}
U(t_{n}+\tau)=e^{\tau L}U(t_{n})+e^{\tau L}\Pi_{\tau^{-1}}\Sigma\big(\Pi_{\tau^{-1}}U(t_{n})\big)\Delta_{n}W + R(t_{n}),
\end{equation}
with a remainder 
\begin{equation}\label{eq:R_tn}
R(t_{n})=R_{1}(t_{n})+R_{2}(t_{n})+I_2(t_{n}) 
\end{equation}
satisfying the following estimates (according to Lemma \ref{lem:r_1}--\ref{lem:r_2}): 
\begin{equation}\label{eq:estimate_R_tn}
\mathbb{E}\left[\|R(t_{n})\|_{0}^{2}\right] 
\lesssim 
\left\{ 
\begin{aligned} 
&\tau^{1+4\gamma-} && \text{for} && \gamma\in \Big(0,\frac{1}{2}\Big] 
&& \text{and}\quad d=1,2, \\[-2pt] 
&\tau^{1+2\gamma} && \text{for} && \gamma\in \Big(0,\frac{1}{2}\Big] 
&& \text{and}\quad d=3, \\[-2pt]  
&\tau^{4\gamma-} && \text{for} && \gamma\in \Big(\frac{1}{2},\frac{3}{4}\Big] 
&& \text{and}\quad d=3. 
\end{aligned}
\right.
\end{equation}
Thus, by dropping the remainder $R(t_n)$ from \eqref{eq:final_U}, we obtain the first-order scheme in \eqref{eq:filtered_1st_scheme}.

\subsection{Error estimates in Theorem \ref{thm:0_norm_error_rough_data}}\label{subsec:filtered_1st}

We compare the numerical scheme \eqref{eq:filtered_1st_scheme} with the equation \eqref{eq:final_U} satisfied by the exact solution. The difference between the two equations is written as follows:  
\begin{equation*}
U(t_{n+1})-U^{n+1}=e^{\tau L}(U(t_{n})-U^{n})+e^{\tau L}\Pi_{\tau^{-1}}\left[\Sigma(\Pi_{\tau^{-1}}U(t_n))-\Sigma(\Pi_{\tau^{-1}}U^n)\right]\Delta_n W+R(t_{n}) . 
\end{equation*}
By iterating this relation with respect to $n$, we obtain 
\begin{align}\label{eq:subtracting_equation_2}
U(t_{n+1})-U^{n+1}=e^{(n+1)\tau L}(U(t_{0})-U^0)+\sum_{j=0}^{n}e^{(n-j)\tau L}\mathcal{L}^{j},
\end{align}
where 
$$
\mathcal{L}^{j}=e^{\tau L}\Pi_{\tau^{-1}}\left[\Sigma(\Pi_{\tau^{-1}}U(t_j))-\Sigma(\Pi_{\tau^{-1}}U^j)\right]\Delta_j W+R(t_{j}).
$$
Then, by applying the $\|\cdot\|_0$ norm to both sides of \eqref{eq:subtracting_equation_2} and using relation $U(t_0)=U^0$, we have 
\begin{align}\label{eq:0_norm_error_estimate}
\big\|U(t_{n+1})-U^{n+1}\big\|_{0}^{2}=
&\sum_{j,k}^{n}\big\langle e^{(n-j)\tau L}\mathcal{L}^{j}, e^{(n-k)\tau L}\mathcal{L}^{k} \big\rangle_0.
\end{align}
The expectation of the right-hand side of \eqref{eq:0_norm_error_estimate} can be estimated by using It\^{o}'s isometry and the independency of the random variables $\mathcal{L}^{j}$, $j=1,2,\dots$, i.e., 
\begin{align}
\mathbb{E}\big[\|U(t_{n+1})-U^{n+1}\|_{0}^{2}\big] 
&=
\sum_{j=0}^n\mathbb{E}\big[\|e^{(n-j)\tau L}\mathcal{L}^{j}\|_{0}^2\big]\notag\\
&\lesssim 
\sum_{j=0}^n\tau\mathbb{E}\big[\|\Sigma(\Pi_{\tau^{-1}}U(t_j))-\Sigma(\Pi_{\tau^{-1}}U^j)\|_{0}^2\big] +\sum_{j=0}^n\mathbb{E}\big[\|R(t_j)\|_{0}^2\big].
\end{align}
Since
\begin{align*}
\left\|\Sigma(\Pi_{\tau^{-1}}U(t_j))-\Sigma(\Pi_{\tau^{-1}}U^j)\right\|_{0}&= \left\|\sigma(\Pi_{\tau^{-1}}u(t_j))-\sigma(\Pi_{\tau^{-1}}u^j)\right\|_{H^{-1}}\\
&\lesssim \left\|\sigma(\Pi_{\tau^{-1}}u(t_j))-\sigma(\Pi_{\tau^{-1}}u^j)\right\|_{L^2}\lesssim \left\|U(t_j)-U^j\right\|_{0},
\end{align*}
it follows that Lemma~\ref{lem:r_1}--\ref{lem:r_2} that
\begin{align}
\mathbb{E}\big[\|U(t_{n+1})-U^{n+1}\|_{0}^{2}\big]
\lesssim &
\sum_{j=0}^n\tau\mathbb{E}\big[\|U(t_j)-U^j\|_{0}^2\big]\notag\\
&+\left\{\begin{aligned} 
&\sum_{j=0}^n\tau^{1+4\gamma-} &&\text{for}&& \gamma\in\Big(0,\frac{1}{2}\Big]&& \text{and}&& d=1,2,\\[-5pt]
&\sum_{j=0}^n\tau^{1+2\gamma} &&\text{for}&& \gamma\in\Big(0,\frac{1}{2}\Big]&& \text{and}&& d=3,\\[-5pt]
&\sum_{j=0}^n\tau^{4\gamma-} &&\text{for}&& \gamma\in\Big(\frac{1}{2},\frac{3}{4}\Big]&& \text{and}&& d=3,
\end{aligned} 
\right.
\end{align}
under the condition $U^0\in H^{\gamma}(\mathcal{O})\times H^{\gamma-1}(\mathcal{O})$.
By using the discrete Gronwall's inequality, we obtain the result of Theorem \ref{thm:0_norm_error_rough_data}.\hfill\qed

\section{The algorithm implementations and numerical experiments}\label{section:numerical_test}

In this section, we discuss the implementation of the proposed low-regularity integrator in \eqref{eq:filtered_1st_scheme} for solving the stochastic wave equation in \eqref{system}, and provide numerical results which support the theoretical analysis and demonstrate the advantages of the proposed method. 

\subsection{Implementation of the algorithm}\label{subsec:fully_discrete}

Firstly, we discuss the implementation of the algorithm with a specially designed Fourier spectral method for spatial discretization, called high-frequency recovering method. This method is based on low- and high-frequency decomposition of the semi-discrete numerical solution $U^{n+1}$ through rewriting \eqref{eq:filtered_1st_scheme} as follows: 
\begin{align*}
\Pi_{\tau^{-1}}U^{n+1}&=e^{\tau L}\Pi_{\tau^{-1}}U^{n}+\tau e^{\tau L}\Pi_{\tau^{-1}}F(\Pi_{\tau^{-1}}U^{n})+e^{\tau L}\Pi_{\tau^{-1}}\Sigma\left(\Pi_{\tau^{-1}}U^{n}\right)\Delta_n W ,\\
\Pi_{>\tau^{-1}}U^{n+1}&=e^{\tau L}\Pi_{>\tau^{-1}}U^{n},
\end{align*}
where the low- and high-frequency parts of $U^{n+1}$ can be computed independently of each other, and the stochastic noise appears only in the low-frequency part. The high-frequency part of the solution at any given time level $t_{n+1}$ can be solved directly as follows (without relying on time steppings): 
$$
\Pi_{>\tau^{-1}}U^{n+1}=e^{(n+1)\tau L}\Pi_{>\tau^{-1}}U^0 .
$$ 
This inspires us to discretize the original problem with the Fourier spectral method and the following high-frequency recovering scheme: 
\begin{subequations}\label{eq:1st_HRLRI}
\begin{align}
U^{n+1}_N&=\Pi_{N}U^{n+1}_N+\Pi_{(N,N^{\alpha}]}U^{n+1}_N,\label{eq:1st_HRLRI_1}\\[2mm]
\Pi_{N}U^{n+1}_N&=e^{\tau L}\Pi_{N}U^{n}_N+\tau e^{\tau L}I_{N}F(\Pi_{N}U^{n}_N)+e^{\tau L}I_{N}\Sigma\left(\Pi_{N}U^{n}_N\right)\Delta_n W,\label{eq:1st_HRLRI_3} \\[2mm]
\Pi_{(N,N^{\alpha}]}U^{n+1}_N&=e^{(n+1)\tau L}\Pi_{(N,N^{\alpha}]}U^{0},\label{eq:1st_HRLRI_2} 
\end{align}
\end{subequations}
where $N=O(\tau^{-1})$, and the high-frequency part $\Pi_{(N,N^{\alpha}]}U^{n+1}_N$ computed by \eqref{eq:1st_HRLRI_2} aims at improving the accuracy of spatial discretization without essentially increasing the computational cost, with $\alpha\geq 1$ being a parameter to be chosen in order to balance the computational cost between the time-stepping scheme in \eqref{eq:1st_HRLRI_3} and the computation of the high-frequency part in \eqref{eq:1st_HRLRI_2}. The implementation of \eqref{eq:1st_HRLRI} is presented in the following table (Algorithm 1): 

\begin{algorithm}[htb]
\caption{High-frequency recovered low-regularity integrator \eqref{eq:1st_HRLRI}.}
\LinesNumbered
\KwIn{Initial value $U^0$, final time $T$}
Take $\alpha\geq 1$, and decompose the initial state $U^0=\Pi_{N}U^0+\Pi_{(N,N^\alpha]}U^0+\Pi_{>N^{\alpha}}U^0$\\

\For{$n=0$ to $T/\tau-1$}{
Generate the random variable $\Delta_n W\sim N(0,\tau)$\\
Calculate $I_N \Sigma(\Pi_N U^n_N)$ by using the FFT\\
Determine $\Pi_N U^{n+1}$ from $\Pi_N U^{n}$ and $I_N \Sigma(\Pi_N U^n_N)$ by \eqref{eq:1st_HRLRI_3}
}
Implement the high-frequency recovery process by taking 
$\Pi_{(N,N^{\alpha}]}U^{T/\tau}=e^{TL}\Pi_{(N,N^\alpha]}U^0.$\\
\KwOut{Final state $U^{T/\tau}=\Pi_N U^{T/\tau}+\Pi_{(N,N^{\alpha}]}U^{T/\tau}$.}
\end{algorithm}

Since the nonlinear term $I_N \Sigma(\Pi_N U^n_N)$ can be computed with the Fast Fourier Transform (FFT), the computational cost at every time level is $O(N^d\log(N)^d)$. Therefore, the total cost for computing the low-frequency part at time $T$ is $O(N^{d}\log(N)^d T/\tau )$. In contrast, the high-frequency part of the numerical solution needs not be computed every time level. We only need to compute $e^{TL}\Pi_{(N,N^\alpha]}U^0$ once to recover the high-frequency part of $U^{T/\tau}_{N}$. Therefore, the cost of computing the high-frequency part at time $T$ is $O(N^{\alpha d})$. Under the stepsize condition $\tau= O( N^{-1} )$, one can choose either $\alpha=1+1/d$ to balance the cost for computing the low- and high-frequency parts, in order to improve the accuracy of spatial discretization through adding the high-frequency part without essentially increasing the computational cost. 

\subsection{The stochastic nonlinear wave equation in one dimension}

\begin{example}\label{Example1}\upshape
We first consider the stochastic nonlinear wave equation \eqref{system} with $\sigma(u) = 16\sin(u)$ under the following piecewise smooth discontinuous initial conditions:
\begin{align}\label{1d-initial-value}
\big(u^{0}(x), v^{0}(x)\big) 
=\left\{
\begin{aligned}
&(5, 0) && \text{for}\,\,\, x \in [0.3, 0.425], \\[-1pt]
&(2.5, 0) && \text{for}\,\,\, x \in [0.575, 0.7], \\[-1pt]
&(0, 0) && \text{elsewhere}.
\end{aligned}
\right.
\end{align}
We numerically solve this problem using the proposed method in \eqref{eq:1st_HRLRI} with parameters $\alpha = 2$, $N = 2^{10}$, and $\tau = \frac14 N^{-1}$. In Figure \ref{fig:4-1-1} (a) and (b), we present the evolution of the numerical solution for $t \in [0, T]$ across two different sample paths. The numerical results indicate that the evolution of discontinuous solutions can vary significantly due to the presence of stochastic noise in the equation. Our method effectively captures the evolution of these discontinuous solutions across different sample paths. 

For the two sample paths corresponding to Figures \ref{fig:4-1-1} (c) and (d), we compare the numerical solutions at $T = 0.25$ computed by the proposed method \eqref{eq:1st_HRLRI} (referred to as \texttt{HR-LRI}) with those obtained using the well-established semi-implicit Euler-Maruyama method (SEM) given by 
\begin{align}\label{eq:SEM}
U^{n+1} = U^n + \tau L U^{n+1} + I_N \Sigma(U^n)\Delta_n W,
\end{align}
and the stochastic trigonometric method (STM), expressed as 
\begin{align}\label{eq:STM}
U^{n+1} = e^{\tau L}U^n + e^{\tau L}I_N \Sigma(U^n)\Delta_n W.
\end{align}
The stepsize of time discretization and the degrees of freedoms for spatial discretization are set to $\tau = \frac14 N^{-1}$ and $N = 2^7$, respectively. The reference solution is computed using \eqref{eq:1st_HRLRI} with $\tau = \frac14  N^{-1} = 2^{-14}$. The numerical results demonstrate that the proposed method achieves high accuracy and effectively reduces the spurious oscillations caused by the discontinuities. 

\begin{figure}[htbp!]
\centering
\hspace{-10pt}
\subfigure[Propagation of $u(t,x)$ in Path 1]{\includegraphics[width=7.0cm,height=5.1cm]{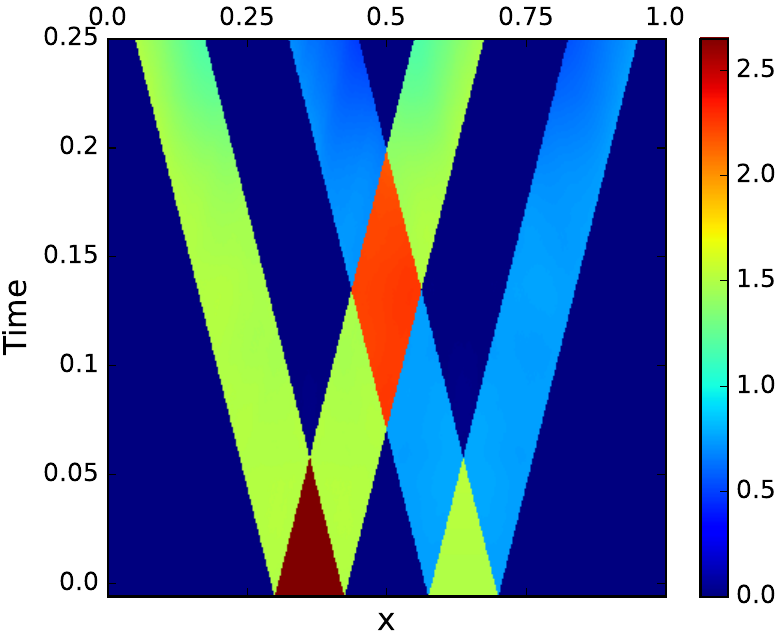}}
\qquad
\subfigure[Propagation of $u(t,x)$ for Path 2]{\includegraphics[width=7.0cm,height=5.1cm]{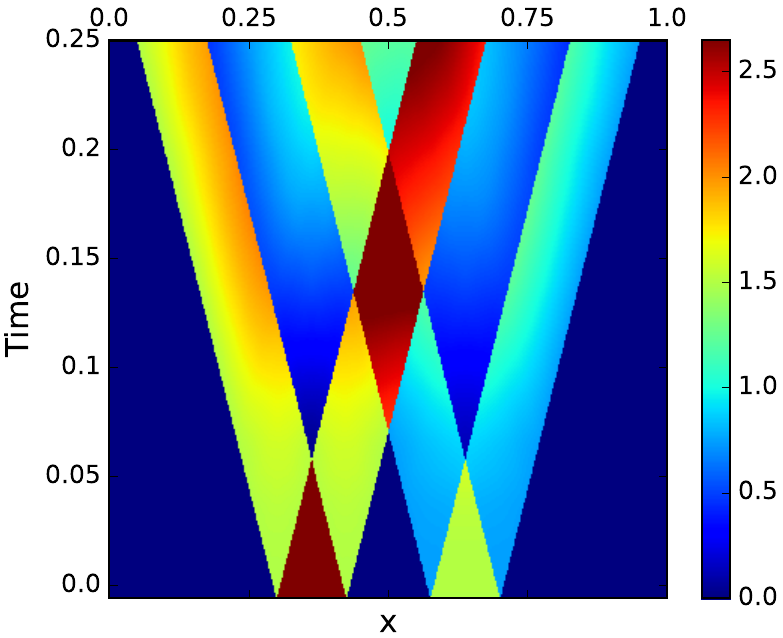}}
\vfill
\centering
\subfigure[Numerical results of $u(T,x)$ for Path 1]{\includegraphics[width=7.0cm,height=6.0cm]{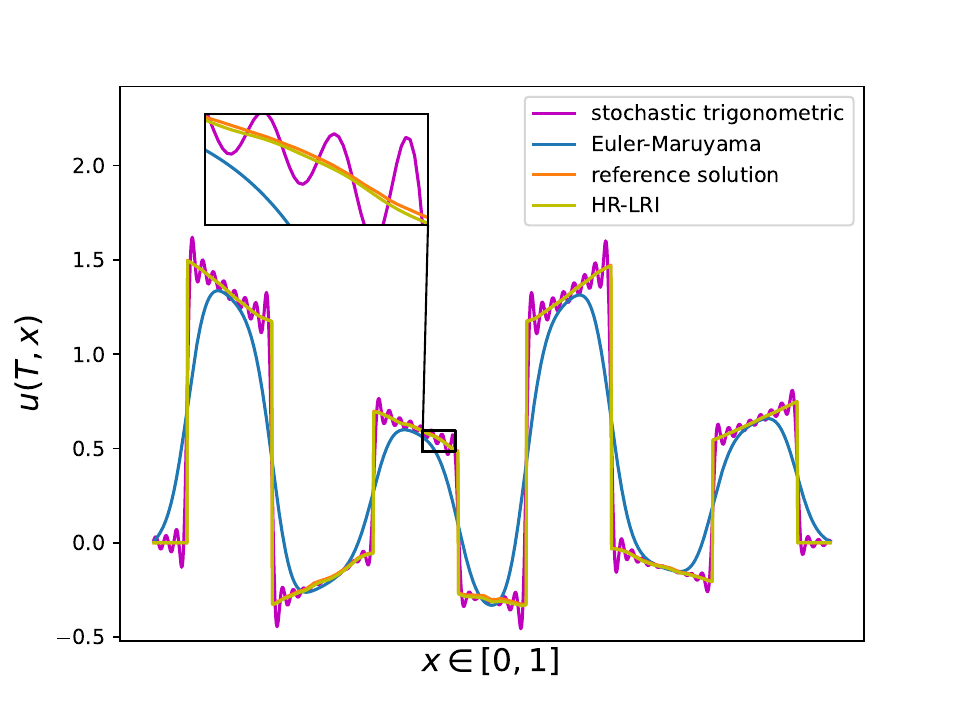}}
\qquad
\subfigure[Numerical results of $u(T,x)$ in Path 2]{\includegraphics[width=7.0cm,height=6.0cm]{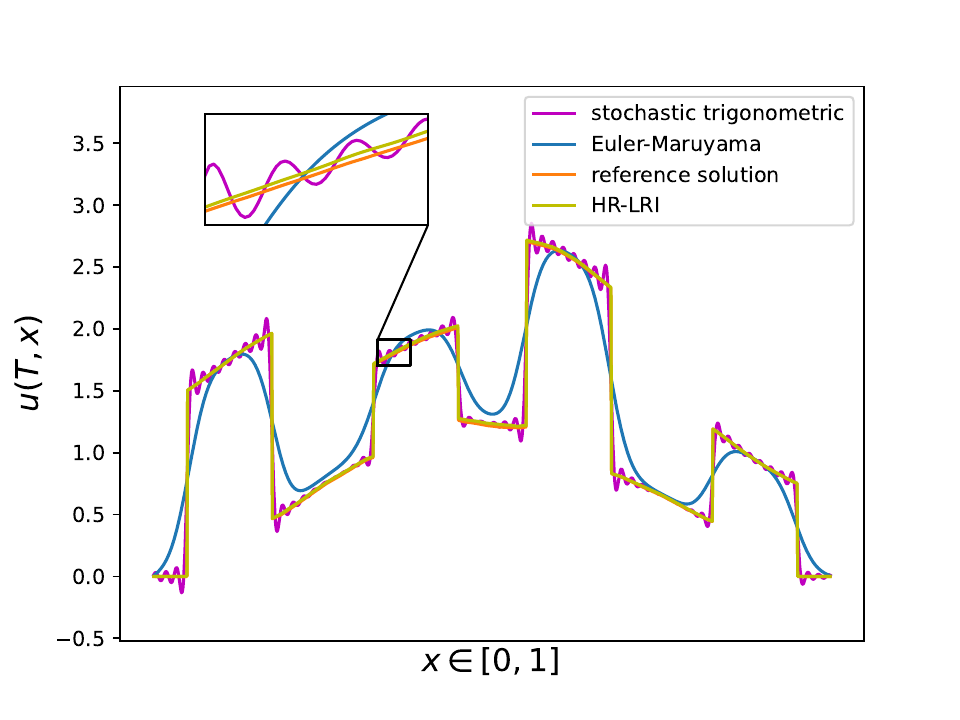}}
\caption{Numerical solutions of the 1D problem (Example \ref{Example1}). }
\label{fig:4-1-1}
\end{figure}

In Figure \ref{fig:4-2}, we compare the mean-square errors in the $L^2(\mathcal{O}) \times H^{-1}(\mathcal{O})$ norm at time $T = 0.25$ for numerical solutions computed by several methods with $\tau = N^{-1}/4$. The reference solution is obtained using the proposed method \eqref{eq:1st_HRLRI} with a sufficiently small time step size of $\tau = N^{-1}/4 = 2^{-14}$. Additionally, the expectations of the numerical errors are estimated using the Monte Carlo method with $1000$ samples. The numerical results indicate that the proposed method achieves nearly first-order convergence, which is consistent with the theoretical result proved in Theorem\ref{thm:0_norm_error_rough_data}. In contrast, classical methods experience a significant reduction in convergence due to the roughness of the exact solution. Figure~\ref{fig:4-2} (b) demonstrates that the proposed method offers considerably higher accuracy than the classical methods, even when operating within the same computational time frame.

\begin{figure}[htbp!]
\centering
\subfigure[$L^{2}(\Omega)\times H^{-1}(\Omega)$ error versus $\tau$]{\includegraphics[width=7.0cm,height=6.0cm]{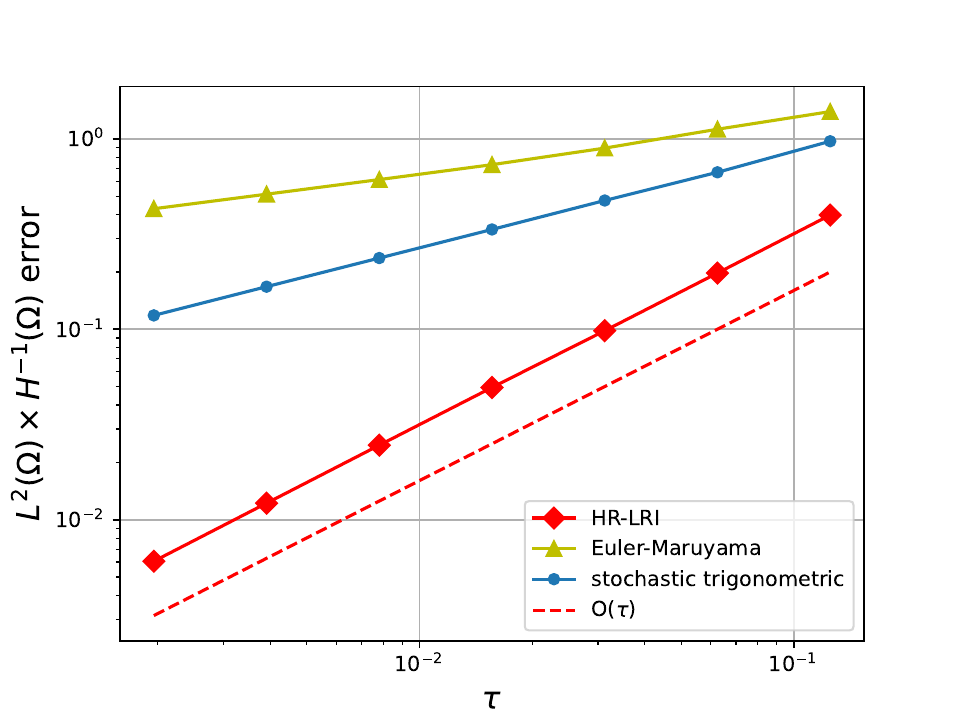}}
\qquad
\subfigure[$L^{2}(\Omega)\times H^{-1}(\Omega)$ error versus CPU time]{\includegraphics[width=7.0cm,height=6.0cm]{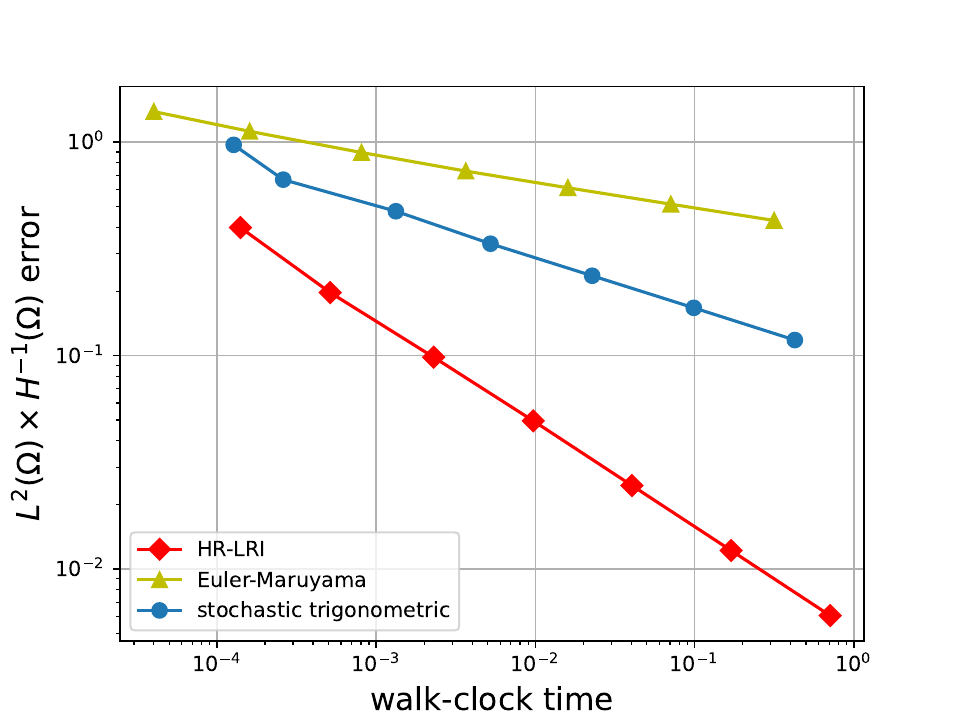}}
\caption{Errors of the numerical solutions by several methods (Example \ref{Example1}).}
\label{fig:4-2}
\end{figure}

\end{example}

\pagebreak
\,
\pagebreak

\begin{figure}[!htbp]
	\centering
	\subfigure[Solution in $H^{\frac12} \times H^{-\frac12}$ (one sample path)]{\includegraphics[width=7.1cm,height=6.0cm]{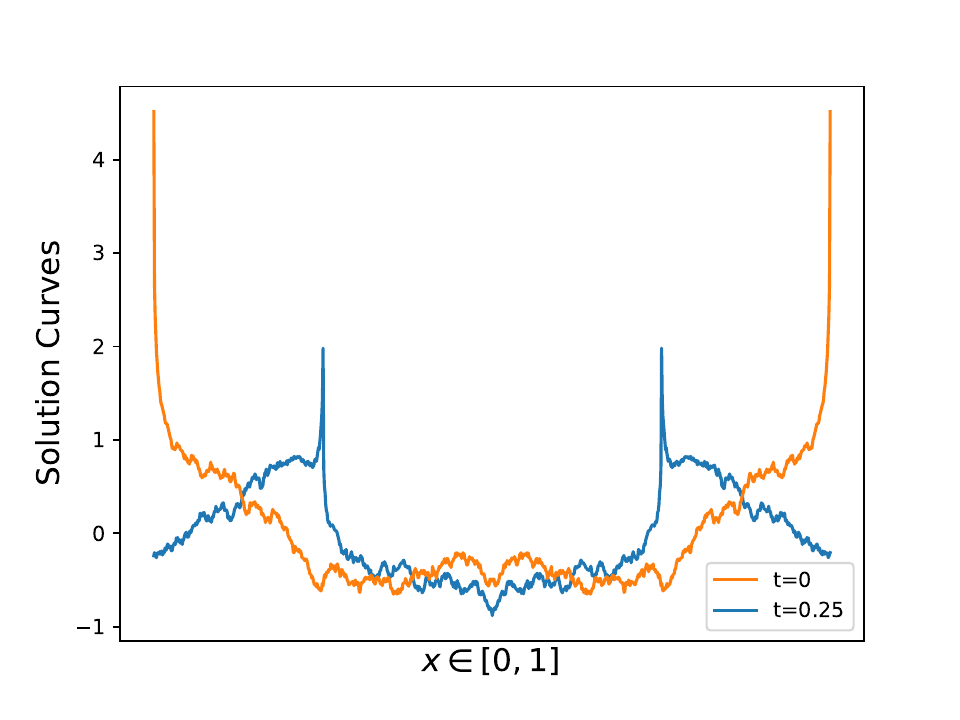}}
	\qquad
	\subfigure[Solution in $H^{4} \times H^{3}$ (one sample path)]{\includegraphics[width=7.1cm,height=6.0cm]{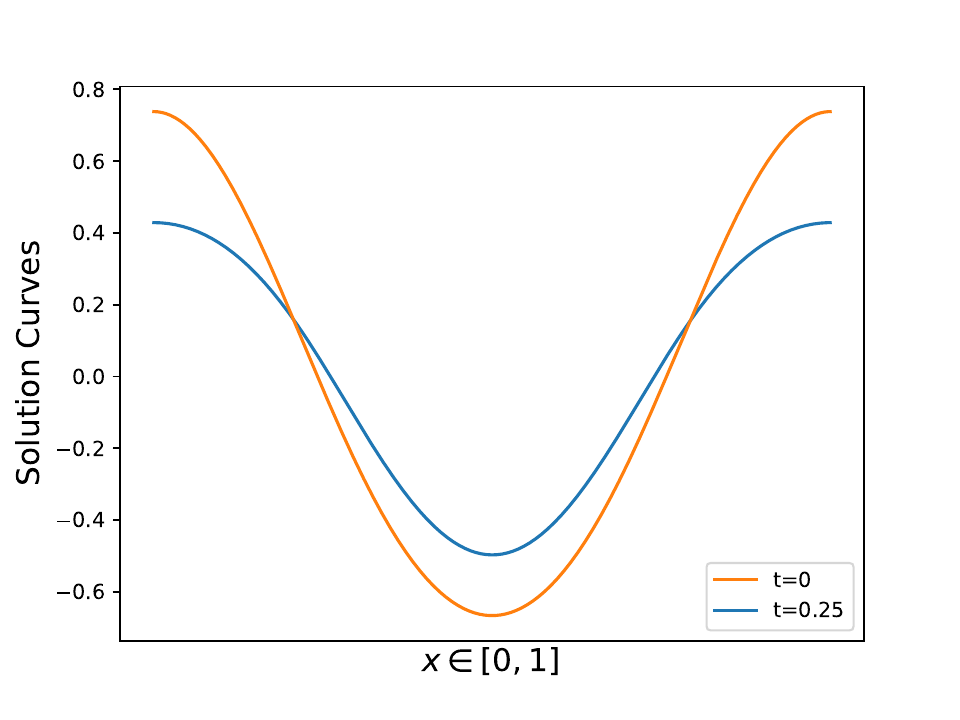}}
	\caption{Numerical solutions in $H^{\frac12} \times H^{-\frac12}$ and $H^{4} \times H^{3}$ (Example \ref{Example2}).}
	\label{fig:4-3-0}
\end{figure}

\begin{figure}[!htbp]
\centering
\subfigure[$L^{2}(\Omega)\times H^{-1}(\Omega)$ error vs $\tau$ for $\gamma=\frac{1}{2}$]{\includegraphics[width=7.0cm,height=6.0cm]{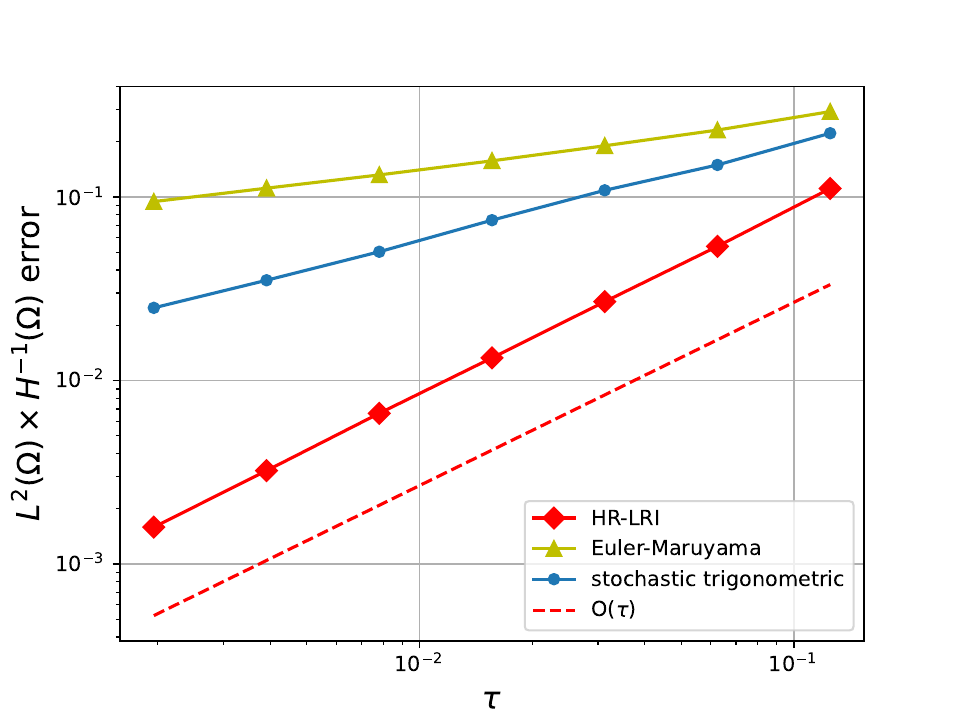}}
\qquad
\subfigure[$L^{2}(\Omega)\times H^{-1}(\Omega)$ error vs CPU time for $\gamma=\frac{1}{2}$]{\includegraphics[width=7.0cm,height=6.0cm]{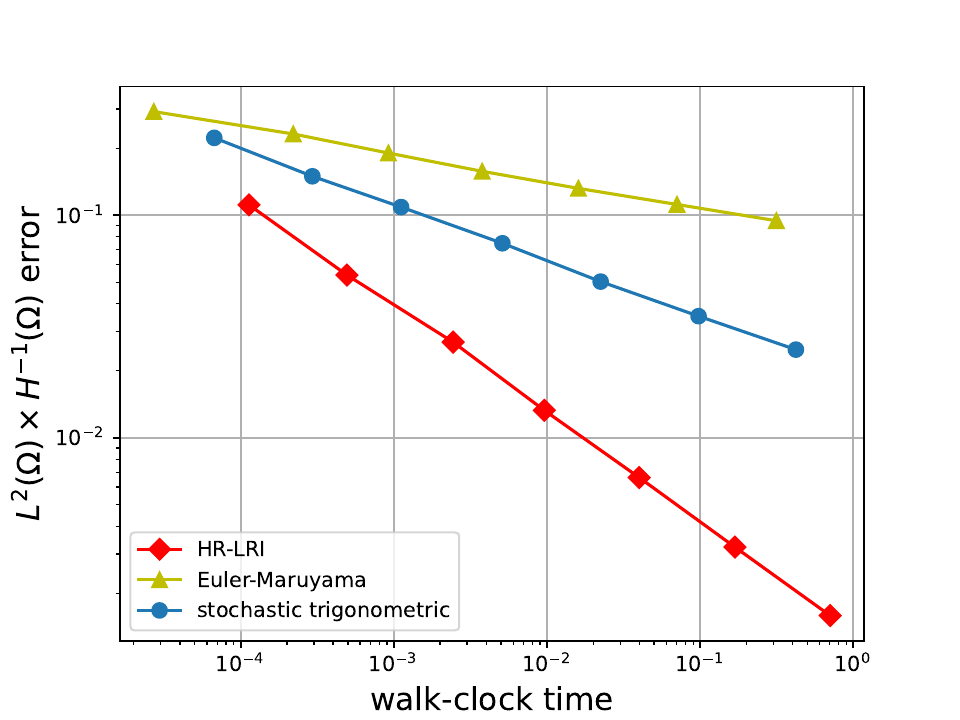}}
\vfill
\centering
\subfigure[$L^{2}(\Omega)\times H^{-1}(\Omega)$ error vs $\tau$ for $\gamma=4$]{\includegraphics[width=7.0cm,height=6.0cm]{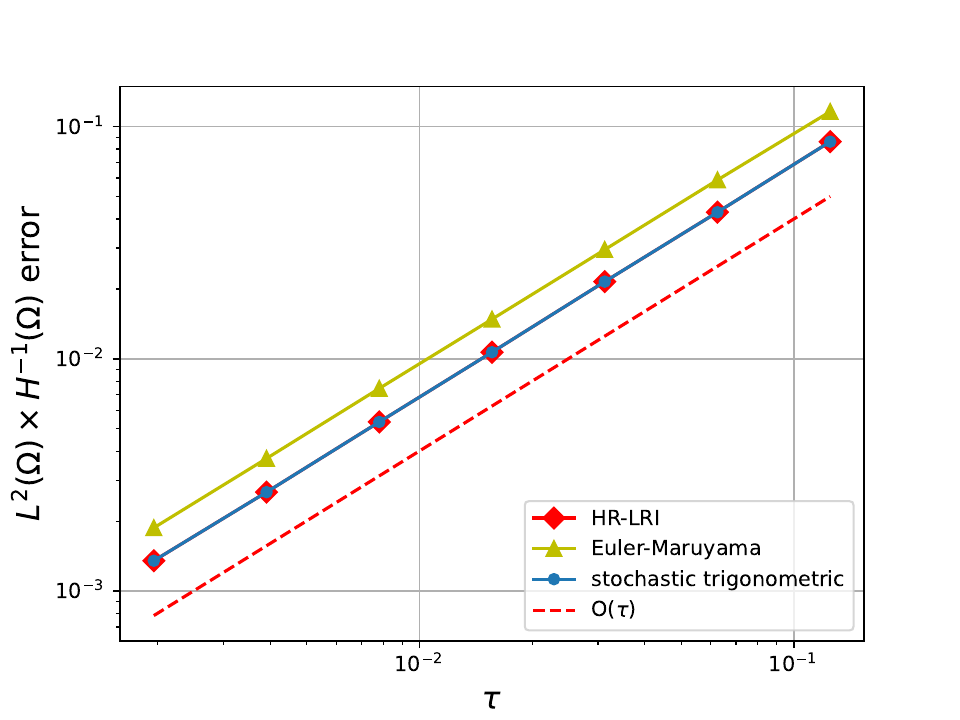}}
\qquad
\subfigure[$L^{2}(\Omega)\times H^{-1}(\Omega)$ error vs CPU time for $\gamma=4$]{\includegraphics[width=7.0cm,height=6.0cm]{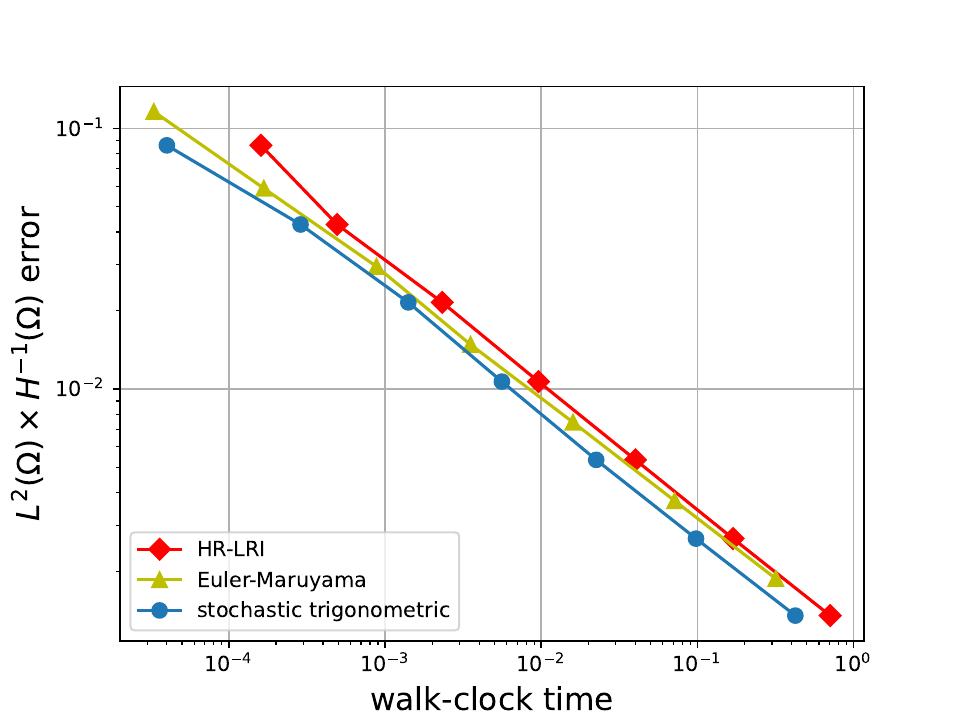}}
\caption{Errors of the numerical solutions by several methods (Example \ref{Example2}).}
\label{fig:4-3}
\end{figure}

\begin{example}\label{Example2}
We consider the stochastic nonlinear wave equation \eqref{system} with $\sigma(u)=16\sin(u)$ for the following initial state in $H^{\gamma}\times H^{\gamma-1}$: 
\begin{equation}\label{eq:rough_initial_1d}
(u^{0}(x),v^{0}(x))=\Big(\sum_{k\in\mathbb{Z}}a_u(k)e^{ikx},\;\sum_{k\in\mathbb{Z}}a_v(k)e^{ikx} \Big),
\end{equation}
where
\begin{align*}
\left\{
\begin{array}{ll}
 a_u(k)=a_u(-k)=\frac{1}{2}\text{rand}(0,1)|k|^{-\gamma-0.51},\\[2mm]
 a_v(k)=a_v(-k)=\frac{1}{2}\text{rand}(0,1)|k|^{-\gamma+0.49}.
\end{array}
\right.
\end{align*} 
We solve this problem numerically using the method proposed in \eqref{eq:1st_HRLRI}, with parameters $\alpha = 2$, $N = 2^{10}$, and $\tau = \frac{1}{4}N^{-1}$. In Figure \ref{fig:4-3-0}, we present the graph of the solution in one sample path, with $\gamma = 0.5$ and $\gamma = 4$, respectively. These graphs illustrate the evolution of the solution under different levels of regularity.

Figure \ref{fig:4-3} illustrates the errors in numerical solutions computed by various methods for different values of $\gamma$, using stepsize $\tau = N^{-1}/4$. The reference solution is provided by the proposed method \eqref{eq:1st_HRLRI} with stepsize $\tau = N^{-1}/4 = 2^{-14}$ and $\alpha = 2$. A Monte Carlo simulation with 1000 samples was conducted. The numerical results show that the proposed HR-LRI method has higher convergence rate than other methods in computing rough solutions in $H^{1/2}\times H^{-1/2}$, and the same convergence rate as other methods in computing smooth solutions in $H^4\times H^{3}$.

\end{example}

\subsection{The stochastic nonlinear wave equation in two dimensions}
\begin{example}\label{Example3} 
In this example, we present numerical results for the stochastic nonlinear wave equation in two dimensions. 
We first consider the problem with $\sigma(u) = 16\sin(u)$, subject to the following piecewise smooth, discontinuous initial condition:
\begin{align}\label{2d-initial-value}
	\big(u^{0}(x),v^{0}(x)\big)=\left\{
	\begin{array}{ll}
		{\displaystyle \left(0.5,0\right)},\quad &\text{for } {\displaystyle x\in \big [0.375,0.625\big]^{2}},\\[2.5mm]
		{\displaystyle \left(0,0\right)},\quad &\text{else where. }
	\end{array}
	\right. 
\end{align}		
We solve the problem using the semi-implicit Euler-Maruyama method in \eqref{eq:SEM} and the proposed method in \eqref{eq:1st_HRLRI} with $\alpha = 3/2$. The numerical solutions for two different sample paths are plotted in Figure~\ref{fig:4-4-1}, where we choose the time step $\tau = N^{-1}/4 = 2^{-8}$. The results demonstrate that the proposed method offers a significant advantage in accurately capturing the sharp interface of the discontinuous solution.
\begin{figure}[!htbp]
\centering
\subfigure[Semi-implicit Euler-Maruyama for Path 1]{\includegraphics[width=6.8cm,height=5.0cm]{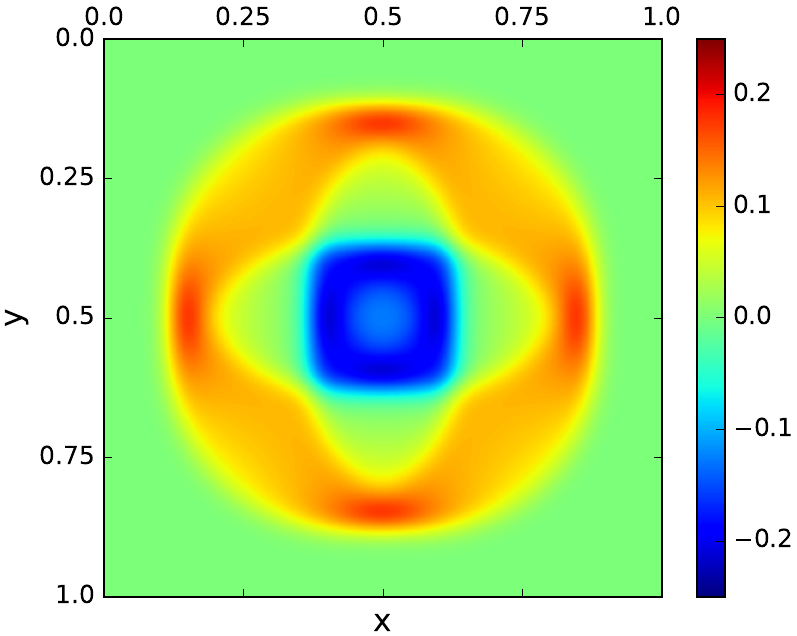}}
\quad
\subfigure[The proposed method for Path 1]{\includegraphics[width=6.8cm,height=5.0cm]{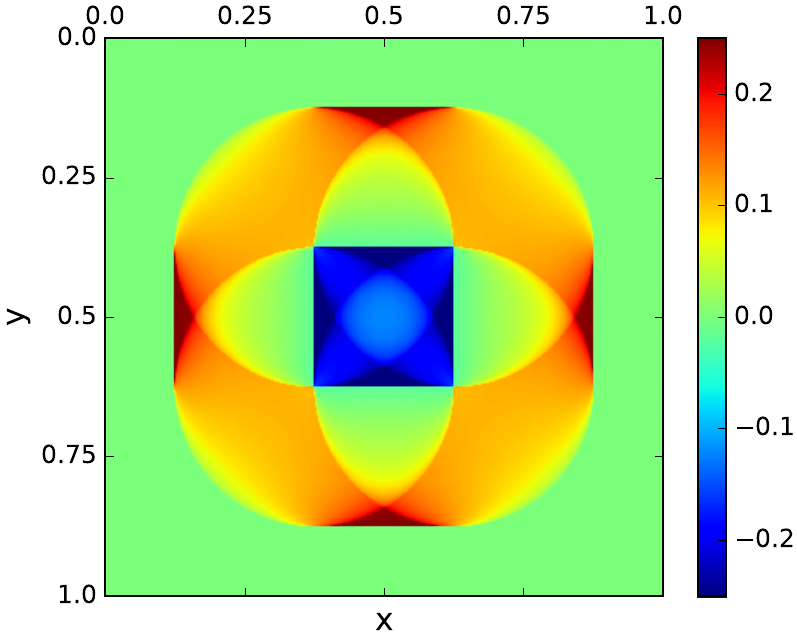}}
\vspace{10pt}
\vfill
\centering
\subfigure[Semi-implicit Euler-Maruyama for Path 2]{\includegraphics[width=6.8cm,height=5.0cm]{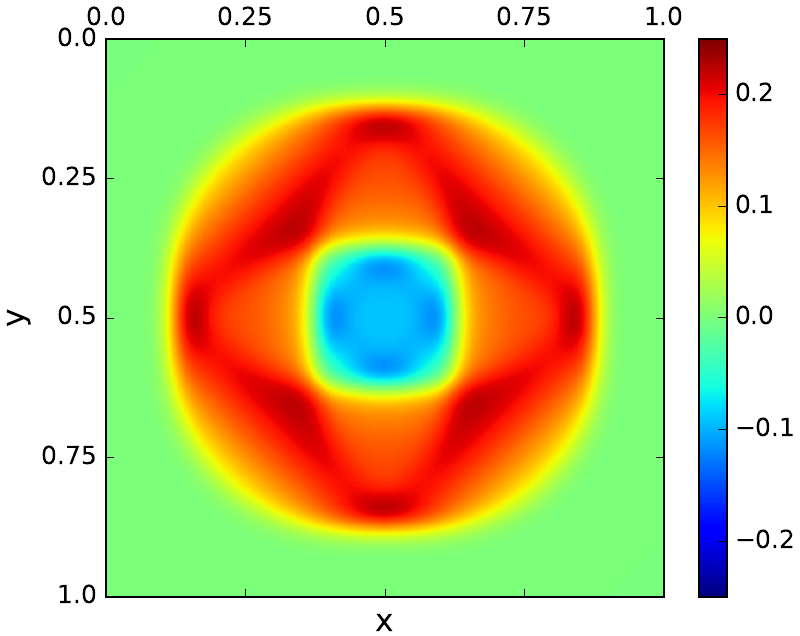}}
\quad
\subfigure[The proposed method for Path 2]{\includegraphics[width=6.8cm,height=5.0cm]{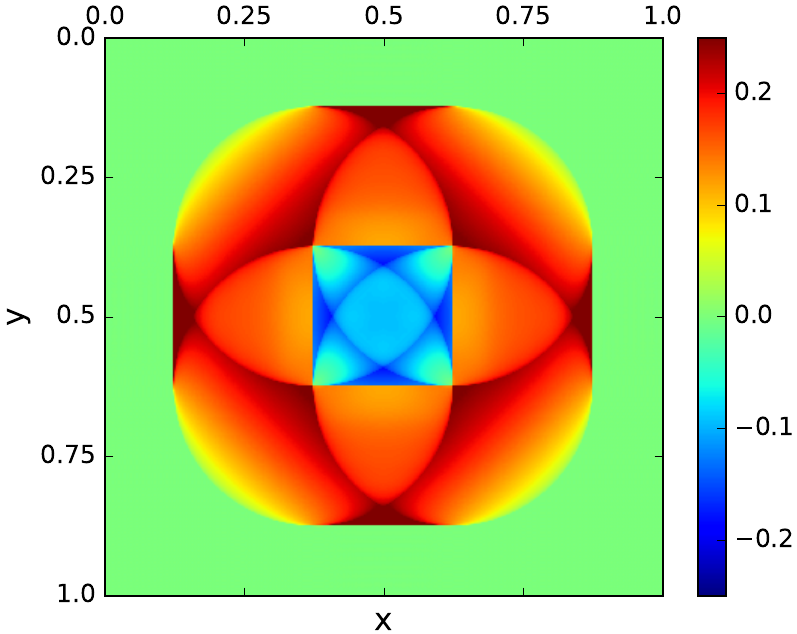}}
\caption{Numerical solutions of the 2D problem (Example \ref{Example3}).} 
\label{fig:4-4-1} 
\end{figure}

\begin{figure}[htbp!]
\centering\vspace{-15pt}
\subfigure[$L^{2}(\Omega)\times H^{-1}(\Omega)$ error versus $\tau$]{\includegraphics[width=6.8cm,height=5.8cm]{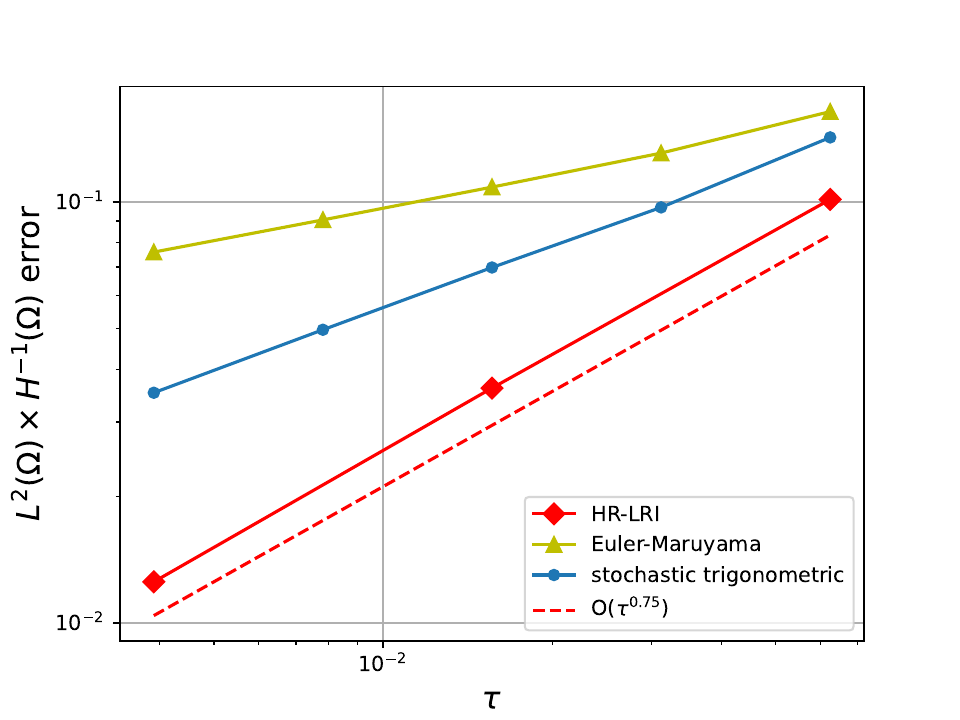}}
\quad
\subfigure[$L^{2}(\Omega)\times H^{-1}(\Omega)$ error versus CPU time]{\includegraphics[width=6.8cm,height=5.8cm]{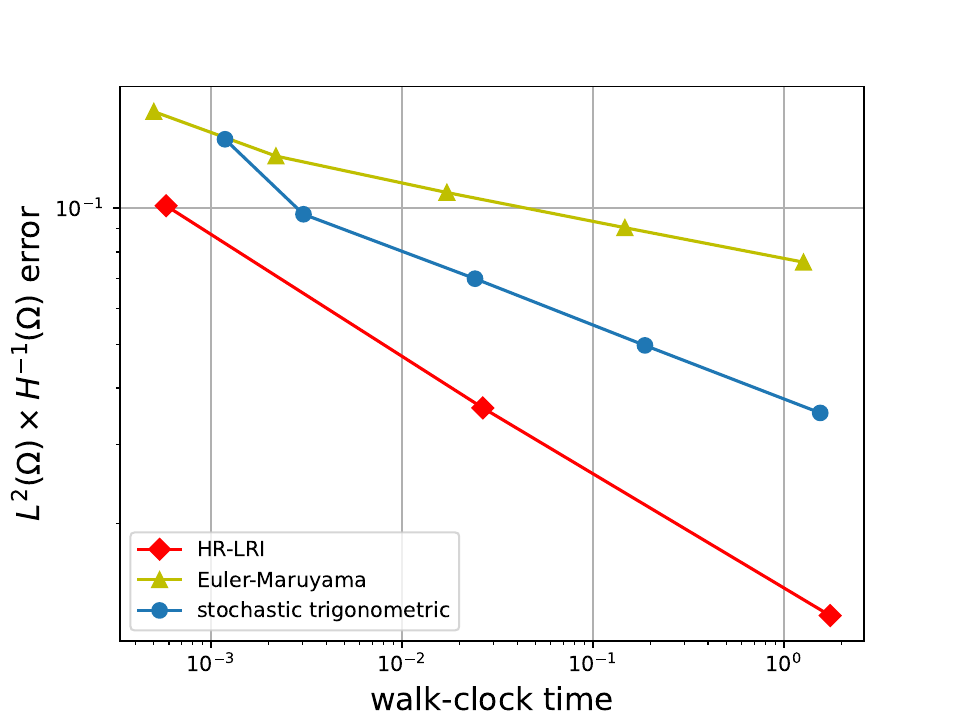}}
\caption{Errors of the numerical solutions by several methods (Example \ref{Example3}).}
\label{fig:4-4}
\end{figure}

As in previous comparisons, we evaluate the mean-square errors in the $L^2(\mathcal{O})\times H^{-1}(\mathcal{O})$ norm at time  $T=0.25$, for the numerical solutions obtained using several methods with $\tau=N^{-1}/4$. The reference solution is computed using the proposed method in \eqref{eq:1st_HRLRI} with $\tau=N^{-1}/4=2^{-10}$ and $\alpha=3/2$. The expectation of the errors are computed over 1000 sample paths and prseented in Figure \ref{fig:4-4}, where the numerical results are consistent with the convergence rate proved in Theorem \ref{thm:0_norm_error_rough_data} and demonstrating that the proposed method retains an advantage in the two-dimensional case.

\end{example}

\begin{example}\label{Example4}
We consider the two-dimensional stochastic nonlinear wave equation with \(\sigma(u) = 16 \sin(u)\), with a more general initial state \((u^{0}, v^{0}) \in H^{\gamma} \times H^{\gamma - 1}\) given by 
\begin{equation}\label{eq:rough_initial_2d}
	(u^{0}(x), v^{0}(x)) = \left( \sum_{k,l \in \mathbb{Z}} a_u(k) b_u(l) e^{i(kx_1 + lx_2)}, \; \sum_{k,l \in \mathbb{Z}} a_v(k) b_v(l) e^{i(kx_1 + lx_2)} \right),
\end{equation}
where
\begin{align*}
	\left\{
	\begin{array}{ll}
		a_u(k)=a_u(-k) = \frac{1}{2}\text{rand}(0,1) |k|^{-\gamma - 0.51}, \quad & b_u(l)= b_u(-l)= \frac{1}{2}\text{rand}(0,1) |l|^{-\gamma - 0.51}, \\[2mm]
		a_v(k)=a_v(-k) = \frac{1}{2}\text{rand}(0,1) |k|^{-\gamma + 0.49}, \quad & b_v(l)=b_v(-l) = \frac{1}{2}\text{rand}(0,1) |l|^{-\gamma + 0.49}.
	\end{array}
	\right.
\end{align*}

For $\gamma = \frac{1}{2}$ and $\gamma = 4$ (corresponding to initial states in $H^{\frac{1}{2}} \times H^{-\frac{1}{2}}$ and $H^{4} \times H^{3}$, respectively), we solve the problem using the method proposed in \eqref{eq:1st_HRLRI} with $\alpha = \frac{3}{2}$ and $\tau = \frac{1}{4}N^{-1} = 2^{-8}$. In Figure~\ref{fig:4-5-0}, we present the numerical solutions for different regularities. The results demonstrate the propagation of rough waves governed by the semilinear wave equation.

Figure~\ref{fig:4-5} illustrates the rates of convergence of the mean-square $L^2(\mathcal{O}) \times H^{-1}(\mathcal{O})$ error for various numerical methods with the CFL condition $\tau = N^{-1}/4$. In the proposed method \eqref{eq:1st_HRLRI}, we choose $\alpha = 3/2$ to balance the computational cost between high and low-frequency components, as discussed at the end of Section \ref{subsec:fully_discrete}. The reference solution is obtained using the same method \eqref{eq:1st_HRLRI}, with $\tau = N^{-1}/4 = 2^{-10}$, and the expectation is computed by averaging over 1000 samples. The numerical results show that the proposed HR-LRI method has higher convergence rate in computing rough solutions in $H^{1/2}\times H^{-1/2}$, and the same convergence rate as other methods in computing smooth solutions in $H^4\times H^{3}$.

\begin{figure}[!htbp]
	\centering
	\subfigure[$u_0\in H^{\frac12}\times H^{-\frac12}$ in one sample path]{\includegraphics[width=7.0cm,height=5.3cm]{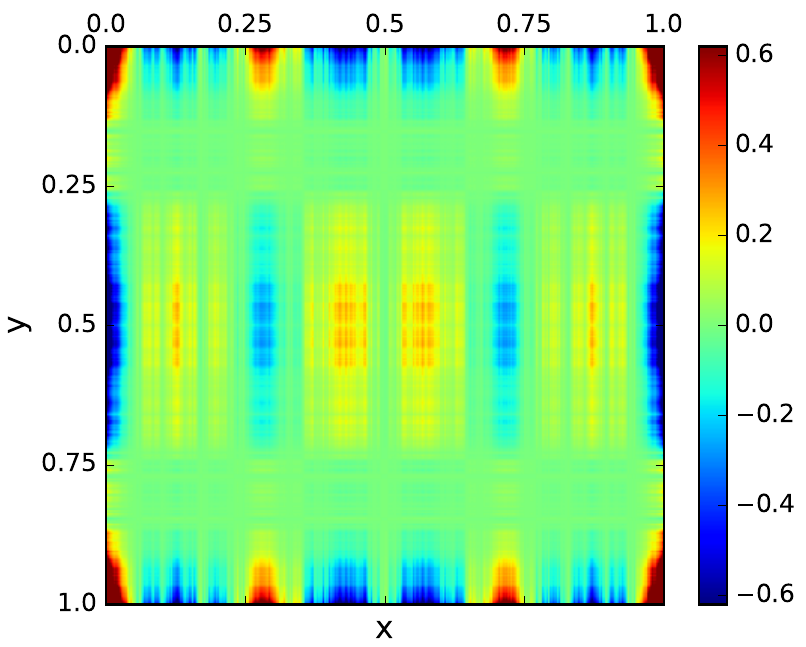}}
	\quad
	\subfigure[$u(t)\in H^{\frac12}\times H^{-\frac12}$ in one sample path]{\includegraphics[width=7.0cm,height=5.3cm]{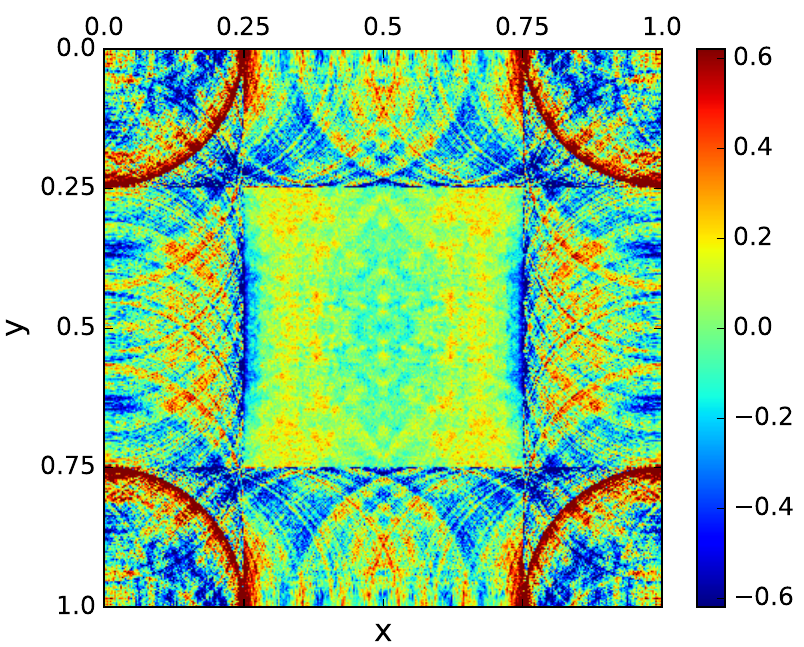}}
	\vfill\bigskip
	\centering
	\subfigure[$u_0\in H^{4}\times H^{3}$ in one sample path]{\includegraphics[width=7.0cm,height=5.3cm]{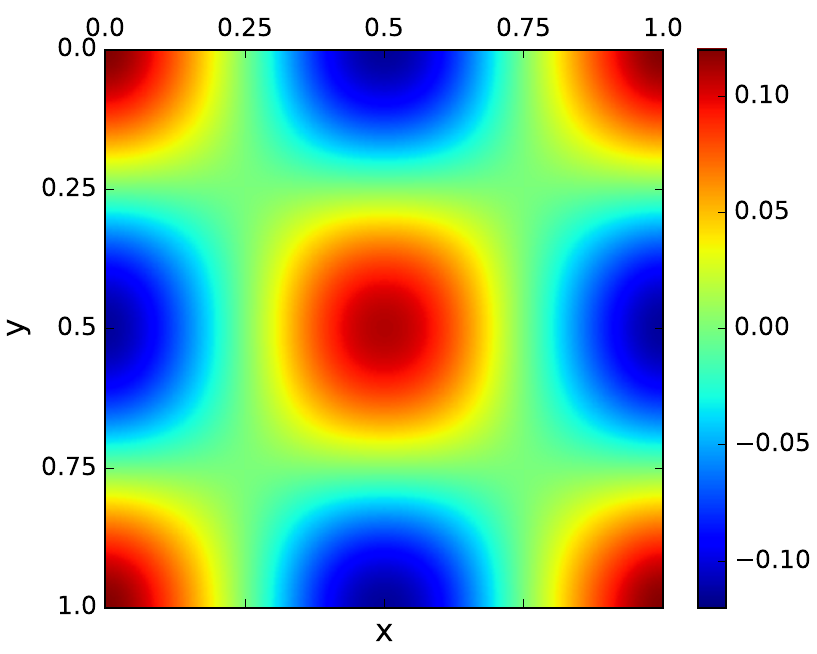}}
	\quad
	\subfigure[$u(t)\in H^{4}\times H^{3}$ in one sample path]{\includegraphics[width=7.0cm,height=5.3cm]{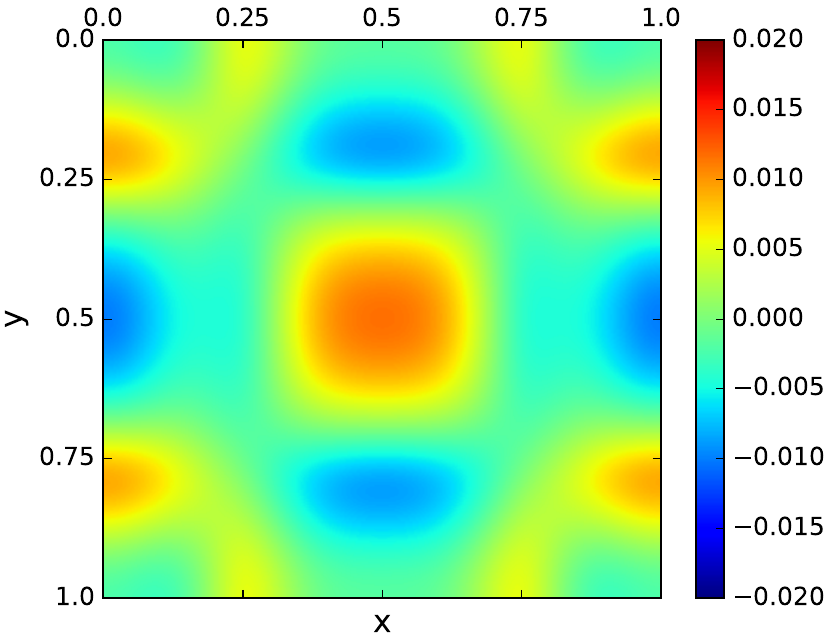}}
	\caption{Numerical solutions in two dimensions (Example \ref{Example4}).}
	\label{fig:4-5-0}
\end{figure}

\begin{figure}[!htbp]
\centering
\subfigure[$L^{2}(\Omega)\times H^{-1}(\Omega)$ error vs $\tau$ for $\gamma=\frac{1}{2}$]{\includegraphics[width=7.0cm,height=6.0cm]{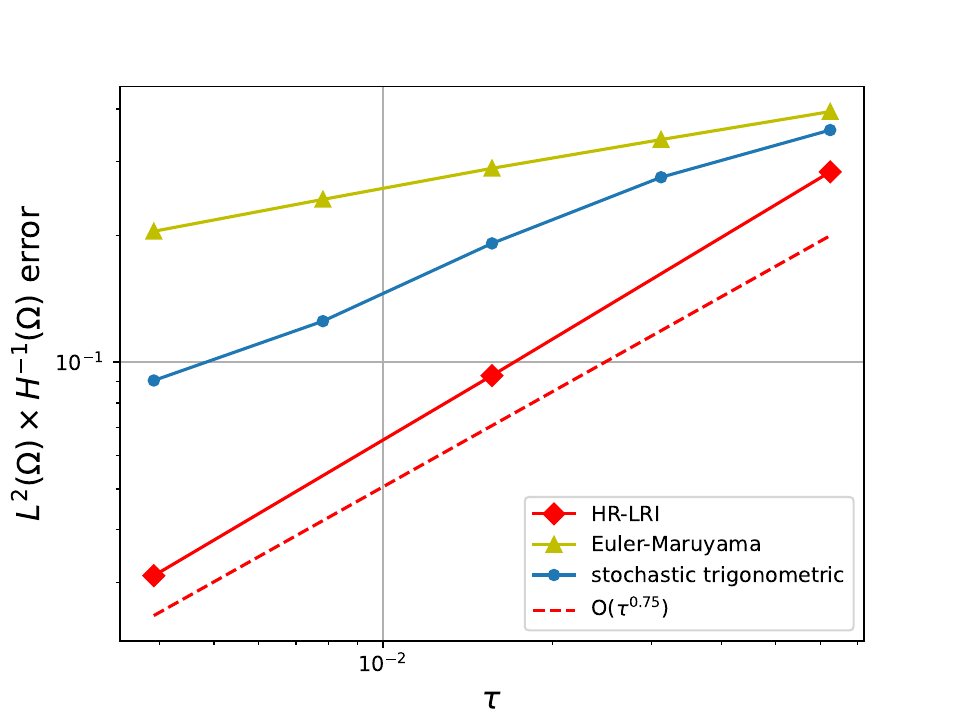}}
\quad
\subfigure[$L^{2}(\Omega)\times H^{-1}(\Omega)$ error vs $\tau$ for $\gamma=4$]{\includegraphics[width=7.0cm,height=6.0cm]{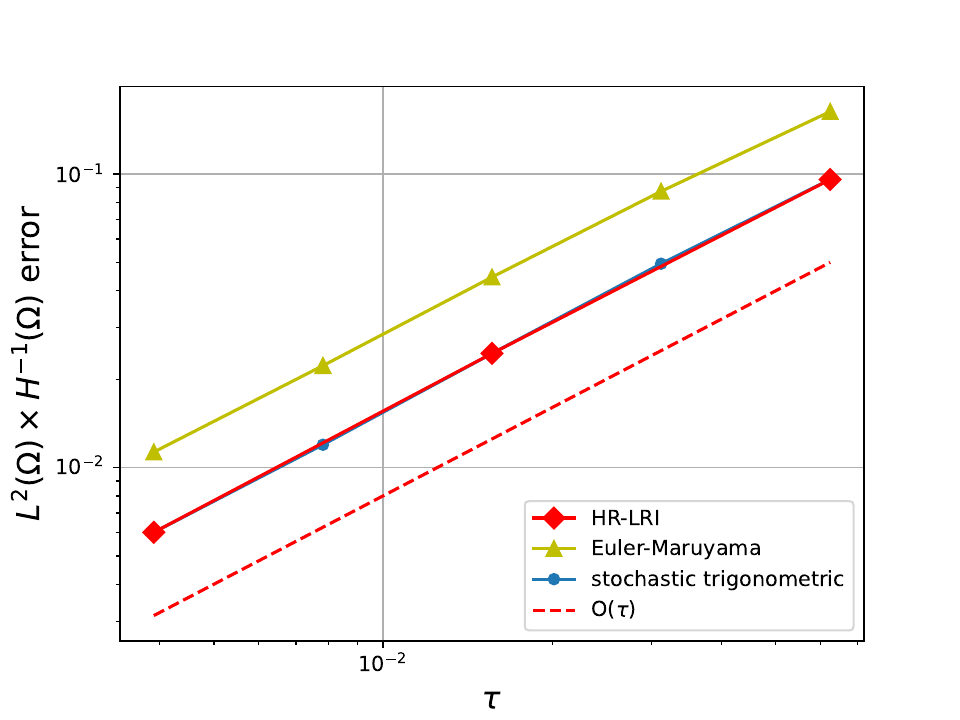}}
\caption{Errors of the numerical solutions by several methods (Example \ref{Example4}).}
\label{fig:4-5}
\end{figure}


\end{example}

\pagebreak
\section{Conclusion}

By leveraging the structure of the stochastic nonlinear wave equation and utilizing harmonic analysis tools such as low- and high-frequency decomposition techniques, we have developed an efficient numerical algorithm for computing rough solutions of the stochastic nonlinear wave equation, while significantly relaxing the regularity requirements for the initial data. Specifically, we have proved that the proposed method achieves an error bound of $O(\tau^{2\gamma-})$ in one and two dimensions for initial values $(u^{0}, v^{0}) \in H^{\gamma} \times H^{\gamma-1}$ and $\gamma \in (0, \frac{1}{2}]$, where $\tau$ represents the time discretization step size. Notably, this convergence rate is twice as fast as that of existing methods for the stochastic nonlinear wave equation under the same regularity conditions. The effectiveness of our approach is further demonstrated by the numerical results presented in Examples \ref{Example1}--\ref{Example4}, which highlight the practical significance of the method.

We believe that the techniques developed in this paper can be extended to design higher-order convergent numerical methods for the stochastic wave equation (with relaxed regularity conditions on the initial data), as well as for other stochastic partial differential equations. These extensions will be explored in future studies.

\section*{Acknowledgement}
This work was supported in part by the CAS AMSS-PolyU Joint Laboratory of Applied Mathematics, Research Grants Council of Hong Kong (Project No. PolyU/RFS2324-5S03 and PolyU/GRF15306123) and internal grants of Hong Kong Polytechnic University (Project ID: P0045404).

\bibliographystyle{model1-num-names}

\end{document}